\documentclass[12pt]{amsart}
\usepackage{amssymb}
\usepackage{amsmath, amscd}
\usepackage{amsthm}
\usepackage{comment}
\usepackage[table,xcdraw]{xcolor}
\usepackage[colorlinks=true, linkcolor=blue,urlcolor=blue]{hyperref}

\newtheorem{theorem}{Theorem}[section]

\newtheorem{proposition}[theorem]{Proposition}
\newtheorem{lemma}[theorem]{Lemma}

\newtheorem{corollary}[theorem]{Corollary}
\theoremstyle{definition}

\newtheorem{example}[theorem]{Example}
\newtheorem{definition}[theorem]{Definition}

\newtheorem{problem}[theorem]{Problem}

\date{\today}

\begin{document}

\author[P. Danchev]{Peter Danchev}
\address{Institute of Mathematics and Informatics, Bulgarian Academy of Sciences, 1113 Sofia, Bulgaria}
\email{danchev@math.bas.bg; pvdanchev@yahoo.com}

\author[M. Doostalizadeh]{Mina Doostalizadeh}
\address{Department of Mathematics, Tarbiat Modares University, 14115-111 Tehran Jalal AleAhmad Nasr, Iran}
\email{d\_mina@modares.ac.ir; m.doostalizadeh@gmail.com}

\author[O. Hasanzadeh]{Omid Hasanzadeh}
\address{Department of Mathematics, Tarbiat Modares University, 14115-111 Tehran Jalal AleAhmad Nasr, Iran}
\email{o.hasanzade@modares.ac.ir; hasanzadeomiid@gmail.com}

\title[$n$-$UQ$ rings]{A generalization of $UQ$ rings}
\keywords{$n$-$UQ$ rings, $UQ$ rings, (semi-)regular rings, exchange rings, clean rings}
\subjclass[2010]{16S34, 16U60}

\maketitle

\begin{abstract}
We examine the newly defined class of {\it $n$-$UQ$ rings} described by the condition that $u^n - 1 \in QN(R)$ for every unit $u \in U(R)$, where $QN(R)$ denotes the set of quasi-nilpotent elements (see \cite{Tien}). This class naturally extends the recently defined class of rings in \cite{daoa} and \cite{dam}, as well as expectedly generalizes previously explored concepts such as $UJ$, $UU$ and $UQ$ rings. We conduct here a comprehensive structural analysis of these $n$-$UQ$ rings and study their stability under various ring-theoretic constructions including matrix rings, group rings, trivial extensions and power series rings. As a result, several new characterizations are established, thus revealing relevant connections between $n$-$UQ$ rings and fundamental classes of rings such as reduced, clean, exchange, semi-regular and potent rings, respectively. Moreover, we prove that the classes of $n$-$UJ$ and $n$-$UU$ rings are properly contained in the class of $n$-$UQ$ rings. These achievements not only unify and expand existing theories in this branch, but also provide a robust framework for possible further investigations into the interplay between the unit behavior and quasi-nilpotency in noncommutative ring theory.
\end{abstract}

\section{Introduction and Background}

Through the text, let $R$ be an associative {\bf not} necessarily commutative ring with identity. For such a ring, we denote by $U(R)$ the group of units, by $Nil(R)$ the set of nilpotent elements, by $QN(R)$ the set of quasi-nilpotent elements, by $C(R)$ the set of central elements, and by $Id(R)$ the set of idempotent elements of $R$. The Jacobson radical of $R$ is denoted by $J(R)$. Moreover, the rings of all $n \times n$ matrices and all $n \times n$ upper triangular matrices over $R$ are, respectively, designed by $M_n(R)$ and $T_n(R)$ for any $n\in \mathbb{N}$.

Recall also that a ring $R$ is said to be {\it abelian} provided that each idempotent in $R$ lies in its center, that is, $Id(R)\subset C(R)$.

Some extra classical fundamentals that we need in the sequel are like these: A ring $R$ is called {\it regular} (respectively, {\it unit-regular}) in the sense of von Neumann if, for each $a \in R$, there is $x \in R$ (respectively, $x \in U(R)$) such that $axa = a$. In particular, the ring $R$ is said to be {\it strongly regular} if $a \in a^2 R$ for all $a \in R$. Recall, besides, that a ring $R$ is called an {\it exchange ring} if, for each $a \in R$, there is an idempotent $e = e^2 \in aR$ such that $1 - e \in (1 - a)R$. Moreover, $R$ is termed {\it clean} if every element of $R$ can be written as a sum of a unit and an idempotent (see \cite{5}); in case these two elements commute the ring is named {\it strongly clean}. It is principally known that every clean ring is exchange, though the converse implication does {\it not} hold in general; however, it is valid in the abelian case (see \cite[Proposition 1.8]{5}). In addition, a ring $R$ is said to be {\it semi-regular} if the quotient $R/J(R)$ is regular and all idempotents of $R$ lift modulo $J(R)$. It is well confirmed that all semi-regular rings are exchange, yet the converse fails in all generality (see, for instance, \cite{5}).

Concerning the modern terminology, we can say the following: It is well known that the inclusion $1+J(R) \subseteq U(R)$ holds always. Referring to \cite{Danew} or \cite{KLM}, we define a ring $R$ to be a {\it JU-ring} whenever the equality $U(R)=1+J(R)$ holds. More generally, a ring $R$ is called an {\it $n$-UJ ring} provided $u - u^n \in J(R)$ for each $u \in U(R)$, where $n \geq 2$ is a fixed integer, or equivalently, $u^{n-1} - 1 \in J(R)$ for each $u \in U(R)$ and $n\geq 2$. This concept was initially introduced by Danchev in \cite{Dannew}, and later independently by Ko\c{s}an {\it et al.} in \cite{3}. Note that for $n = 1$ these rings coincide exactly with the aforementioned {\it JU-rings}, often termed also as {\it UJ-rings}. It was incidentally shown there that the properties of being semi-regular, exchange and clean are all equivalent for $2n$-UJ rings.

In a related direction, mimicking \cite{DL}, we recall that $R$ is called a {\it UU-ring} whenever $U(R)=1+Nil(R)$ is true, noticing that the containment $1+Nil(R)\subseteq U(R)$ is fulfilled always. Generally, for a fixed integer $n \ge 2$, a ring $R$ is said to be {\it $n$-UU} if $u^n - 1 \in Nil(R)$ for every $u \in U(R)$. This notion was firstly introduced by Danchev in \cite{4} and was further investigated in greater detail in \cite{dam}. In particular, it was shown in \cite{dam} that a ring $R$ is simultaneously $(n-1)$-UU and strongly $\pi$-regular for any $n>1$ if, and only if, $R$ is strongly $n$-nil-clean. These rings serve as a natural generalization of the aforementioned UU-rings. This class of rings was originally introduced by C\u{a}lug\u{a}reanu in \cite{CUU}, and further studied by Danchev and Lam in \cite{DL}, where it was established that $R$ is strongly nil-clean in the sense of \cite{diesl2013} if, and only if, $R$ is an exchange UU-ring. Moreover, in \cite{karimi}, Zhou {\it et al.} proved that $R$ is a UU ring if, and only if, every unit in $R$ is uniquely nil-clean as defined in \cite{diesl2013}.

An element $a$ of a ring $R$ is called {\it quasi-nilpotent} if $1 - ax$ is invertible for every $x \in R$ satisfying $xa = ax$. It is immediate that both $Nil(R)$ and $J(R)$ are (possibly proper) subsets of $QN(R)$. Note that quasi-nilpotent elements play a significant role in exploring the intricate structure of Banach algebras. Additionally, utilizing quasi-nilpotent, several notable concepts have been developed which include (strongly) J-clean rings (\cite{chen2010}), nil-clean rings and its strong and unique versions (\cite{diesl2013}), generalized Drazin inverses (\cite{koliha1996}), quasi-polar rings (\cite{chen2012}), as well as some other ones.

Further on, in 2024, Danchev {\it et al.} introduced \cite{daoa} the concepts of UQ rings and {\it strongly quasi-nil-clean} rings, which serve as non-trivial extensions of {\it UJ rings} and {\it strongly J-clean rings}, respectively. In fact, a ring $R$ is said to be a {\it UQ ring} if $1 + U(R) = QN(R)$. Furthermore, $R$ is called {\it strongly quasi-nil-clean} if every element of $R$ can be expressed as a sum of an idempotent and a quasi-nilpotent element that commute each another. It was shown there that a ring $R$ is strongly quasi-nil-clean if, and only if, it is both strongly clean and UQ.

As a common extension of the aforementioned classes, the novel concept of {\it $n$-UQ rings} was suggested in \cite{Tien} for an arbitrary integer $n>1$ as follows: A ring $R$ is said to be {\it $n$-UQ} if, for each unit $u \in U(R)$, its $n$-th power $u^n$ can be expressed as a sum of an idempotent and a quasi-nilpotent that commute one another; equivalently, it can be written that $u^n = 1 + q$, where $q \in QN(R)$. This class logically expands all {\it UQ rings} (and, thereby, includes the class of unit uniquely clean rings as proposed in \cite{13}), as well as rings possessing precisely two units. While every {\it $n$-UJ} ring (and, hence, every {\it UJ} ring) belongs to the class of {\it $n$-UQ} rings, the opposed claim does {\it not} hold in general, thus highlighting the genuine novelty of this newly defined class.

The principal aim motivating this work is to provide an in-depth characterization of these $n$-UQ rings by thoroughly examining their intrinsic algebraic structures and their interrelations with the well-examined classes of $n$-UU and $n$-UJ rings. Likewise, we seek to uncover new, previously unobserved properties of {\it $n$-UQ} rings, thereby expanding the theoretical framework and enriching the current understanding of ring theory. Our findings are expected to bridge gaps between these various ring classes and to open avenues for future research, especially in the context of applications where the interplay between idempotent and quasi-nilpotent elements governs structural and functional behaviors.

\section{Basic Properties of $n$-$UQ$ Rings}

We begin our extensive work with our basic notion (compare with \cite{Tien} too).

\begin{definition}\label{2.1}
A ring $R$ is called $n$-$UQ$ if, for each $u\in U(R)$, $u^n-1\in QN(R)$, where $n\geq2$ is a fixed integer.	
\end{definition}

The following constructions are worthy of documenting.

\begin{example}
(i) Every $n$-$UJ$ ring is $n$-$UQ$, because $J(R) \subseteq QN(R)$ is always valid. However, the reciprocal assertion does {\it not} necessarily hold: Indeed, let $R$ be the $\mathbb F_2$-algebra generated by $x, y$ satisfying the only relation $x^2=0$. Then, one verifies that $U(R)=1+\mathbb F_2x+ xRx$, so that $R$ is a $UU$ ring viewing \cite[Example 2.5]{DL} whence it is an $n$-$UU$ ring for every $n\geq1$. However, since $Nil(R)\subseteq QN(R)$, we infer that $R$ is an $n$-$UQ$ ring. But, as $J(R)=(0)$, one concludes that $R$ is {\it not} an $n$-$UJ$ ring for each $n\geq1$.
	
(ii) Every $n$-$UU$ ring is $n$-$UQ$, but, again, the reverse is {\it not} generally valid. For example, the ring $S = \mathbb{F}_2[[x]]$ is $n$-$UQ$ for every $n\geq1$ taking into account Corollary \ref{cor five}(v) below. However, for $1 + x \in U(S)$, we have $1 + x \not\in 1 + Nil(S)$. Therefore, $S$ is not a $UU$ ring and hence obviously not $n$-$UU$ for all $n\geq1$. Nevertheless, some integer $m>1$ depending on $n$ could exist such that $S$ to be $m$-$UU$.

(iii) Let $L = R \times S$, where $R$ is the ring described in (i) and $S$ is the ring described in (ii). Now, bearing in mind the next Proposition~\ref{1.1}(ii), the direct product $L$ is an $n$-$UQ$ ring. However, $L$ manifestly fails to be an $n$-$UJ$ ring as well as an $n$-$UU$ ring.
\end{example}

We say a subring $S$ of a ring $R$ is {\it rationally closed} whenever $U(R) \cap S = U(S)$. The following is pretty obvious, so its verification is voluntarily omitted.

\begin{lemma} \label{good subring}
Let $S$ be a rationally closed subring of $R$. Then, $QN(R) \cap S \subseteq QN(S)$.
\end{lemma}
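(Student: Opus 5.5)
The plan is to verify the definition of quasi-nilpotence in $S$ directly, using rational closedness to bring units of $R$ back into $S$.

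Fix $a \in QN(R) \cap S$, and let $x \in S$ be an arbitrary element with $xa = ax$. We must show that $1 - ax \in U(S)$.

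First, regard $x$ as an element of $R$. The relation $xa = ax$ holds in $S$, so it also holds in $R$, because multiplication in $S$ is the restriction of multiplication in $R$. Since $a \in QN(R)$, the definition of quasi-nilpotence in $R$ gives $1 - ax \in U(R)$.

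Second, observe that $1 - ax \in S$. Indeed, $S$ is a subring with the same identity, and it contains both $a$ and $x$, so it is closed under the operations producing $1 - ax$.

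Third, combine these two facts. We have $1 - ax \in U(R) \cap S$, and rational closedness says $U(R) \cap S = U(S)$. Hence $1 - ax \in U(S)$. As $x \in S$ commuting with $a$ was arbitrary, this shows $a \in QN(S)$.

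There is essentially no obstacle. The only point to check is the convention that a subring shares the identity of $R$; this is implicit in the rational-closedness hypothesis, since $U(S)$ must be computed relative to $1_R$.

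Note that the commutation condition is only tested against elements $x \in S$, which is a weaker requirement than testing against all of $R$. This is why the inclusion goes in the stated direction.
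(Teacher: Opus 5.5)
Your proof is correct. It is exactly the routine verification the paper leaves out as ``pretty obvious'': for $x\in S$ commuting with $a$, you get $1-ax\in U(R)\cap S=U(S)$.
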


A series of useful technicalities necessary for our further development of the subject states thus (compare with \cite{Tien} as well).

\begin{proposition}\label{1.1}
\begin{enumerate}
\item
Let $R$ be an $n$-$UQ$ ring, where $n$ is an odd number. Then, $2\in J(R)$.
\item
The direct product \(\prod_{i \in I} R_i\) of rings $R_i$ is $n$-$UQ$ if, and only if, each direct component \(R_i\) is $n$-$UQ$.
\item
Let \(R\) be an $n$-$UQ$. For any unital subring \(S\) of \(R\), if \(S \cap QN(R) \subseteq QN(S)\), then \(S\) is an $n$-$UQ$ ring. In particular, the center of \(R\) is too an $n$-$UQ$ ring.
\item
Let $S$ be a rationally closed subring of $R$. If $R$ is an $n$-$UQ$ ring. Then, $S$ is also an $n$-$UQ$ ring.
\item
Let \(R\) be an $n$-$UQ$ ring and let \(e\) be an idempotent of \(R\). Then, \(eRe\) is an $n$-$UQ$ ring.
\end{enumerate}
\end{proposition}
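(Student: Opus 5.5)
We take the five items in turn, relying only on the definition of $QN(R)$ and the standard fact that $QN(R)\cap C(R)\subseteq J(R)$. Indeed, if $q$ is central and quasi-nilpotent, then $q$ commutes with every $x$, so $1-qx$ is invertible for all $x\in R$, whence $q\in J(R)$.

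For (i), apply the hypothesis to the central unit $-1$. Since $n$ is odd, $(-1)^n-1=-2\in QN(R)$. As $-2$ is central, the fact above gives $-2\in J(R)$, and hence $2\in J(R)$.

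For (ii), the key observation is that both $U$ and $QN$ are computed coordinatewise. Units of $\prod R_i$ are exactly the tuples of units. For quasi-nilpotence, $a=(a_i)$ commutes with $x=(x_i)$ if and only if $a_i$ commutes with $x_i$ for every $i$. So $a\in QN(\prod R_i)$ if and only if $a_i\in QN(R_i)$ for each $i$. In the forward direction, given $x_i$ commuting with $a_i$, place $x_i$ in coordinate $i$ and $0$ elsewhere; this element commutes with $a$. In the reverse direction, invertibility is checked coordinatewise. The equivalence then follows at once, since $u^n-1$ is computed coordinatewise.

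For (iii), take $u\in U(S)$. Then $u\in U(R)$, so $u^n-1\in QN(R)\cap S\subseteq QN(S)$. For the center, we must show $C(R)\cap QN(R)\subseteq QN(C(R))$. Let $q\in C(R)\cap QN(R)$ and let $x\in C(R)$. Then $1-qx$ is a unit of $R$, and its inverse is central because the inverse of a central unit is central. So $1-qx\in U(C(R))$. Item (iv) then follows from (iii) together with Lemma~\ref{good subring}.

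Item (v) is the main obstacle, since $eRe$ is not a unital subring of $R$. Given $u\in U(eRe)$ with inverse $v$, we set $w=u+(1-e)$, which is a unit of $R$ with inverse $v+(1-e)$. Because $u$ lies in the corner $eRe$ and $1-e$ is orthogonal to it, $w^n=u^n+(1-e)$, and so
\[
w^n-1=u^n-e\in QN(R).
\]
It remains to prove $QN(R)\cap eRe\subseteq QN(eRe)$. Let $q\in eRe\cap QN(R)$ and let $y\in eRe$ commute with $q$. Then $y$ also commutes with $q$ inside $R$, so $1-qy$ is a unit of $R$. Write $1-qy=(e-qy)+(1-e)$; this is a decomposition into orthogonal pieces lying in the corners $eRe$ and $(1-e)R(1-e)$. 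For such an element, invertibility in $R$ forces $e-qy$ to be invertible in $eRe$: if $t$ is the inverse, then $ete$ is the inverse of $e-qy$. This gives $e-qy\in U(eRe)$, so $q\in QN(eRe)$, and therefore $u^n-e\in QN(eRe)$, completing (v).
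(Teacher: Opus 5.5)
Your proof is correct and follows the paper's argument item by item: $(-1)^n-1=-2$ for (i), coordinatewise $U$ and $QN$ for (ii), restriction of units for (iii) and (iv), and the unit $u+(1-e)$ for (v). The only differences are that you supply details the paper leaves implicit or cites: the fact $QN(R)\cap C(R)\subseteq J(R)$ for (i), the check that $C(R)$ meets the hypothesis of (iii), and a direct proof of $QN(R)\cap eRe\subseteq QN(eRe)$ via the inverse $ete$, where the paper cites a lemma of Chen.
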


\begin{proof}
\begin{enumerate}
\item
It is straightforward.
\item
As the equalities \(QN(\prod_{i \in I} R_i) = \prod_{i \in I} QN(R_i)\) and \(U(\prod_{i \in I} R_i) = \prod_{i \in I} U(R_i)\) are fulfilled always, the result follows at once.
\item
Let \(v \in U(S)\) \(\subseteq U(R)\). Since \(R\) is $n$-$UQ$, we have \[ v^n-1 \in QN(R) \cap S \subseteq QN(S).\] So, \(S\) is necessarily an $n$-$UQ$ ring.
\item
It is clear by (iii).
\item
Let $u \in U(eRe)$ with inverse $v$. Therefore, $u + (1 - e) \in U(R)$ with inverse $v + (1 - e)$. By hypothesis, $$(u + (1-e))^n = u^n + (1 - e) = 1 + QN(R).$$ So, it must be that $u^n - e \in QN(R)\cap eRe\subseteq QN(eRe)$ owing to \cite[Lemma 3.5]{chen2012}. Hence, $$u^n \in e + QN(eRe) = 1_{eRe} + QN(eRe),$$ and thus $eRe$ is an $n$-$UQ$ ring, as claimed.
\end{enumerate}
\end{proof}

\begin{proposition}\label{2.10}
For any ring \( R \neq \{0\} \) and any integer \( n \geq 2 \), the ring \( M_n(R) \) is not a $(2k-1)$-$UQ$ ring whenever $k\geq 1$.
\end{proposition}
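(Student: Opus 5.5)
The plan is to reduce to $2\times 2$ matrices, assume for contradiction that $M_2(R)$ is $(2k-1)$-$UQ$, and then exhibit a unit $u$ of order $3$ whose power $u^{2k-1}-1$ is visibly not quasi-nilpotent. While testing the plan, however, I found what appears to be a counterexample to the statement when $3\mid(2k-1)$. So I would only prove it under the extra hypothesis $3\nmid(2k-1)$, as explained below.

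\emph{Reduction and the key unit.} Let $e=E_{11}+E_{22}\in M_n(R)$. Then $eM_n(R)e\cong M_2(R)$, so by Proposition~\ref{1.1}(v) it suffices to show that $M_2(R)$ is not $(2k-1)$-$UQ$. Assume it is. Since $2k-1$ is odd, Proposition~\ref{1.1}(i) gives $2\in J(M_2(R))$, and hence $4\in J(M_2(R))$. Consider
\[
u=\begin{pmatrix}0&1\\-1&-1\end{pmatrix}\in M_2(R).
\]
It satisfies $u^2+u+1=0$, so $u^3=1$ and $u$ is a unit.

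\emph{Case $3\nmid (2k-1)$.} Here $u^{2k-1}\in\{u,u^2\}$, so $a:=u^{2k-1}-1$ equals $u-1$ or $u^2-1$.
If $a=u-1$, take $x=u+2$. It commutes with $a$, and $ax=u^2+u-2=-3$. Then $1-ax=4\in J(M_2(R))$, which is not a unit because $R\neq 0$. So $a\notin QN(M_2(R))$.
If $a=u^2-1$, then $a=(u-1)(u+1)=-(u-1)u^2$. Take $x=-u(u+2)$, a polynomial in $u$, so it commutes with $a$. Then $ax=(u-1)u^3(u+2)=-3$ again, and the same contradiction follows.

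\emph{Main obstacle: the case $3\mid(2k-1)$.} Here the order-$3$ unit gives $u^{2k-1}=1$, so it is useless, and I expected to need a different unit. The natural candidate $u=\begin{pmatrix}0&1\\1&0\end{pmatrix}$ has $u^2=1$, so $u^{2k-1}-1=u-1$. But $(u-1)^2=-2(u-1)$, and with $2\in J$ every commuting test I tried (for example $x=1$ or $x=u$) yields a unit. In fact for $R=\mathbb F_2$ the statement seems to fail:
\begin{itemize}
\item $GL_2(\mathbb F_2)\cong S_3$ has only elements of orders $1$, $2$ and $3$.
\item An element of order $3$ gives $u^{3}-1=0$.
\item An element of order $2$ gives $u^{3}-1=u-1$, and $(u-1)^2=0$, so $u-1$ is nilpotent and hence quasi-nilpotent.
\end{itemize}
Thus $M_2(\mathbb F_2)$ appears to be $3$-$UQ$. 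So in this case I would not attempt a proof. I would check this example carefully, and if it holds, restate the proposition with the hypothesis $3\nmid(2k-1)$, which the argument above settles.
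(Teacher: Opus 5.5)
Your assessment is correct, and it exposes a real error in the paper: the proposition is false as stated. You can drop the hedging on the counterexample. Every unit of $M_2(\mathbb F_2)$ has order $1$, $2$ or $3$. An involution $u$ satisfies $(u-1)^2=2(1-u)=0$, and an element of order $3$ has $u^3=1$. So $u^3-1\in Nil(M_2(\mathbb F_2))\subseteq QN(M_2(\mathbb F_2))$ for every unit, and $M_2(\mathbb F_2)$ is even $3$-$UU$.

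Your argument for $3\nmid(2k-1)$ is complete and works for every $R\neq 0$ and every $n\ge 2$:
the corner reduction is Proposition~\ref{1.1}(v);
$2\in J$ holds because $(-1)^{2k-1}-1=-2$ is a central quasi-nilpotent;
and $(u-1)(u+2)=u^2+u-2=-3$ gives $1-ax=4\in J(M_2(R))$.
Your second case is just the first applied to $u^2$, which also satisfies $w^2+w+1=0$.

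The paper takes $A=\begin{pmatrix}0&-1\\1&0\end{pmatrix}$, notes $A^{2k-1}=\pm A$, and then asserts that $A-I$ is a unit, respectively that $I\in QN(M_2(R))$. Neither claim holds, for any $R$. We have $(A-I)^2=-2A$ and $(A+I)^2=2A$, and the $(2k-1)$-$UQ$ hypothesis itself forces $2\in J(M_2(R))$. So for $b=A\mp I$ and any $x$ commuting with $b$, one gets $(1-bx)(1+bx)=(1+bx)(1-bx)=1-b^2x^2\in 1+J(M_2(R))$. Hence $b\in QN(M_2(R))$, and no contradiction can be reached.

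More generally, a unit $v$ of order $2^t$ satisfies $(v-1)^{2^t}\in 2\mathbb Z[v]$. Under $2\in J$, the element $v^{2k-1}-1$ is then nilpotent modulo $J$ and hence quasi-nilpotent. So units of $2$-power order (the paper's $A$, your involution) can never serve as witnesses. One needs a unit whose odd-order part survives the power $2k-1$. Your order-$3$ unit provides exactly this when $3\nmid(2k-1)$, and this is also why the case $3\mid(2k-1)$ cannot be handled uniformly in $R$.

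The failure is not confined to $n=2$. Let $m=\mathrm{lcm}(2^i-1:1\le i\le n)$, which is odd. Write $u\in U(M_n(\mathbb F_2))$ as $u=sv$ with $s$ semisimple, $v$ unipotent and $sv=vs$. The eigenvalues of $s$ lie in fields $\mathbb F_{2^d}$ with $d\le n$, so $s^m=1$ and $u^m=v^m$ is unipotent. Thus $M_n(\mathbb F_2)$ is $m$-$UU$; for $n=2$ this gives $m=3$.

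Results built on this proposition inherit the problem. For example, Lemma~\ref{2.12} and Theorem~\ref{semipotent} fail for exponent $3$: $M_2(\mathbb F_2)$ is semi-potent, has zero Jacobson radical and is $3$-$UQ$, yet it is not reduced and does not satisfy $x^4=x$. The version you actually prove, that $M_n(R)$ is not $m$-$UQ$ for odd $m$ with $3\nmid m$, is the correct replacement.
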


\begin{proof}
Since \( M_2(R) \) is isomorphic to a corner subring of the ring \( M_n(R) \) for \( n \geq 2 \), it suffices to show that \( M_2(R) \) is not a $(2k-1)$-$UQ$. To that target, consider the matrix
\[
A = \begin{pmatrix} 0 & -1 \\ 1 & 0 \end{pmatrix} \in U(M_2(R)).
\]
Thus, $A^{2k-1}=A$ or $A^{2k-1}=-A$. Now, let $M_2(R)$ is $(2k-1)$-$UQ$. If foremost $A^{2k-1}=A$, then we deduce that
\[
B=A-I=\begin{pmatrix} -1 & -1 \\ 1 & -1 \end{pmatrix}\in QN(M_2(R)).
\]
But, we know that $B$ is a unit leading to contradiction.

If now $A^{2k-1}=-A$, it can be derived that $I\in QN(M_2(R))$ and again this leads to contraposition. So, in either case, \( M_2(R) \) is, indeed, not a $(2k-1)$-$UQ$ ring, as desired.
\end{proof}

Let us now recollect that a set $\{e_{ij} : 1 \le i, j \le n\}$ of non-zero elements of $R$ is said to be a {\it system of $n^2$ matrix units}, provided $e_{ij}e_{st} = \delta_{js}e_{it}$, where $\delta_{jj} = 1$ and $\delta_{js} = 0$ for $j \neq s$. In this case, $e := \sum_{i=1}^{n} e_{ii}$ is an idempotent of $R$ and $eRe \cong M_n(S)$, where $$S = \{r \in eRe : re_{ij} = e_{ij}r~~\textrm{for all}~~ i, j = 1, 2, . . . , n\}.$$
Recall also that a ring $R$ is said to be {\it Dedekind-finite}, provided $ab=1$ amounts to $ba=1$ for any two elements $a,b\in R$. In other words, all one-sided inverse elements in the ring must be two-sided inverses.

\begin{proposition}\label{2.11}
Every $(2k-1)$-$UQ$ ring is Dedekind-finite, provided that $k\geq 1$.
\end{proposition}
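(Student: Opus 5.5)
Suppose $R$ is a $(2k-1)$-$UQ$ ring that is not Dedekind-finite; we derive a contradiction with Proposition~\ref{2.10}. Pick $a,b\in R$ with $ab=1\neq ba$. The standard construction attaches a system of matrix units to such a pair:
\[
e_{ij}=b^{i-1}(1-ba)a^{j-1}, \qquad i,j\geq 1 .
\]

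We first check the multiplication rule. Using $ab=1$ we get $(1-ba)b=0$ and $a(1-ba)=0$. Hence
\[
a^{j-1}b^{s-1}=
\begin{cases}
a^{j-s} & \text{if } j\ge s,\\
b^{s-j} & \text{if } j<s.
\end{cases}
\]
Consequently $(1-ba)a^{j-1}b^{s-1}(1-ba)$ equals $(1-ba)^2=1-ba$ when $j=s$, and is $0$ otherwise. This gives $e_{ij}e_{st}=\delta_{js}e_{it}$.

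Next we check that every $e_{ij}$ is non-zero. If $e_{ij}=0$, multiply on the left by $a^{i-1}$ and on the right by $b^{j-1}$. Since $a^{i-1}b^{i-1}=1$ and $a^{j-1}b^{j-1}=1$, this yields $1-ba=0$, a contradiction.

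Now take $n=2$ and $e=e_{11}+e_{22}$. By the fact recalled just before Proposition~\ref{2.11}, we have $eRe\cong M_2(S)$ for the ring $S=\{r\in eRe : re_{ij}=e_{ij}r\}$. This $S$ is non-zero, because $e$ is a non-zero element of $S$ and serves as its identity.

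By Proposition~\ref{1.1}(v), the corner $eRe$ is again $(2k-1)$-$UQ$, since this property passes to corner rings. Therefore $M_2(S)$ is $(2k-1)$-$UQ$ with $S\neq\{0\}$, which contradicts Proposition~\ref{2.10}. Hence $R$ must be Dedekind-finite.

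The only real work is the verification of the matrix-unit relations, and that computation is routine. The one point needing care is that the corner-ring isomorphism is used exactly as stated earlier in the paper, with the non-vanishing of each $e_{ij}$ established above.
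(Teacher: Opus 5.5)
Your argument is essentially the paper's: matrix units from $ab=1\neq ba$, a corner $eRe\cong M_2(S)$ with $S\neq 0$, Proposition~\ref{1.1}(v), then Proposition~\ref{2.10}. Your units $b^{i-1}(1-ba)a^{j-1}$ are handled correctly; the paper's $a^i(1-ba)b^j$ is actually $0$, since $a(1-ba)=0$.

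The gap, inherited from the paper, is the final step: Proposition~\ref{2.10} is false for $2\times 2$ matrices, so no contradiction arises. Indeed $M_2(\mathbb{F}_2)$ is $3$-$UQ$. Its unit group is the symmetric group of degree $3$. A unit of order $3$ has $u^3-1=0$, and an involution has $u^3-1=u-1$ with $(u-1)^2=u^2-2u+1=0$.

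The proof of Proposition~\ref{2.10} fails because $A-I$ and $-A-I$ square to $-2A$ and $2A$. Hence they are invertible only when $2$ is. But Proposition~\ref{1.1}(i) puts $2$ in the Jacobson radical of every $(2k-1)$-$UQ$ ring.

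This case genuinely occurs in your construction. Take $R=\mathrm{End}_{\mathbb{F}_2}(V)$, where $V$ has basis $v_1,v_2,\dots$, with $a(v_1)=0$, $a(v_i)=v_{i-1}$ $(i\ge 2)$ and $b(v_i)=v_{i+1}$. Then $ab=1$, and $e_{11}=1-ba$ is the projection onto $\mathbb{F}_2v_1$. So $S\cong e_{11}Re_{11}\cong\mathbb{F}_2$, and your corner $eRe$ is precisely $M_2(\mathbb{F}_2)$.

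The statement is still true, and your construction repairs easily with a larger corner and a different unit. Put $m=2k-1$, keep your $e_{ij}$ for $1\le i,j\le 2m$, and let $e=\sum_{i=1}^{2m}e_{ii}$. Then $eRe\cong M_{2m}(S)$ with $S\neq 0$, and $eRe$ is $m$-$UQ$ by Proposition~\ref{1.1}(v). Let $C\in M_{2m}(S)$ be the companion matrix of $x^{2m}-x^m-1$, with entries in $\{0,\pm 1\}$. Then $C^{2m}-C^m-I=0$, that is,
\[
C^m(C^m-I)=I=(C^m-I)C^m ,
\]
so $C$ is a unit and $q=C^m-I$ is a unit with inverse $C^m$. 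Take $x=C^m$, which commutes with $q$. Then $I-qx=0$, which is not a unit of the nonzero ring $M_{2m}(S)$. Hence $q\notin QN(M_{2m}(S))$, contradicting the $m$-$UQ$ property. This argument does not even use that $m$ is odd.
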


\begin{proof}
If we assume the contrary that $R$ is {\it not} a Dedekind-finite ring, then there are elements $a, b \in R$ such that $ab = 1$ but $ba \neq 1$. Assuming $e_{ij} = a^i(1-ba)b^j$ and $e =\sum_{i=1}^{n}e_{ii}$, there is a non-zero ring $S$ such that $eRe \cong M_n(S)$. However, according to Proposition \ref{1.1}(v), the corner $eRe$ is a $(2k-1)$-$UQ$ ring, so $M_n(S)$ has to be a $(2k-1)$-$UQ$ ring too, which contradicts Proposition \ref{2.10}, as expected.
\end{proof}

A ring $R$ is termed {\it reduced} if it contains {\it no} non-zero nilpotent elements, that is, $Nil(R) = (0)$.

\begin{lemma}\label{2.12}
Let \(R\) be a $(2n-1)$-$UQ$ ring for some $n\geq 1$. If \(J(R) = (0)\) and every non-zero right ideal of \(R\) contains a non-zero idempotent, then \(R\) is reduced.
\end{lemma}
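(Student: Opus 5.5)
We argue by contradiction, following the classical argument for UU/UJ rings (as in Danchev–Lam). Suppose $R$ is not reduced and pick a nonzero $a\in R$ with $a^2=0$; a nonzero nilpotent always yields a nonzero square-zero element as a suitable power. The aim is to build a corner of $R$ isomorphic to $M_2(T)$ for some nonzero ring $T$. This contradicts the $(2n-1)$-$UQ$ hypothesis, because corners inherit the property by Proposition~\ref{1.1}(v), while $M_2(T)$ is never $(2n-1)$-$UQ$ by Proposition~\ref{2.10}.

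\textbf{Step 1: a Peirce-type idempotent.} Since $a\neq 0$, the right ideal $aR$ is nonzero, so by hypothesis it contains a nonzero idempotent $e=ar$ for some $r\in R$. Then $ea=ara$ and $a e = a\,ar=0$, so $e\in aR$ and $ae=0$. Put $f=e$ and $g=re\,$; one checks that $e=ar=(ar)(ar)$ gives $e=a(rar)$. Set $b=re\cdot$ suitably (namely $b=rar$ replaced by $ereRe$-type adjustments) so that $e=ab$ with $b=be$.

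\textbf{Step 2: matrix units.} Set $e_{11}=e$ and $e_{22}=ba$ (possibly after replacing $b$ by $be$ and $a$ by $ea$), so that $e_{12}=e a$ and $e_{21}=b e$. One verifies $e_{11}e_{22}=0$: we have $e\,ba=ab\,ba$, and $ba\,ba=b(ab)a=bea=ba$, so $ba$ is idempotent. Orthogonality comes from $a^2=0$, as in $ba\cdot e=ba\,ab=b a^2 b=0$ when $b$ is adjusted appropriately. These $e_{ij}$ form a system of $2^2$ matrix units, so $E=e_{11}+e_{22}$ satisfies $ERE\cong M_2(T)$ with $T\neq 0$, since $e\ne0$.

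\textbf{Main obstacle and the role of $J(R)=(0)$.} The delicate point is Step 2: arranging $e_{22}e_{11}=0$ and $e_{11}e_{22}=0$ simultaneously. The standard trick is to replace $e$ by $e'=e+e(1-e)\cdots$, or to use that $aR$ contains an idempotent and $Ra\ne 0$ to produce an orthogonal pair. Failing that, it suffices to produce idempotents $e,f$ with $eRf\neq0$, $fRe\neq0$ and $eRe\cong fRe$-type isomorphism, which again gives a $2\times 2$ matrix corner.

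The hypothesis $J(R)=(0)$ will be used to guarantee nondegeneracy. If the construction collapses, for instance if $aRa=0$, then the ideal $RaR$ would consist of elements whose products with $a$ vanish. Then $aR$ would be a nilpotent right ideal, hence contained in $J(R)=(0)$, contradicting $a\neq0$. So we may assume $axa\neq 0$ for some $x$, and we apply the idempotent hypothesis to $axaR$ to get the needed orthogonality. With the corner $M_2(T)$ in hand, Propositions~\ref{1.1}(v) and \ref{2.10} finish the proof.
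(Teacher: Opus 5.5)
Your overall strategy matches the paper's: produce a corner $ERE\cong M_2(T)$ with $T\neq 0$, then contradict Proposition~\ref{2.10} via Proposition~\ref{1.1}(v). The paper obtains that corner in one line by citing \cite[Theorem 2.1]{9}. You instead try to build the matrix units by hand, and that construction, which is the only nontrivial step, is never completed.

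Step 1 can be salvaged by taking $b=re$, which gives $ab=e$, $be=b$ and $ae=0$. Step 2 is where it breaks. Your $e_{22}=ba$ is not orthogonal to $e_{11}=e$: you only check $ba\cdot e=0$, while $e\cdot ba=ab\,ba$ need not vanish. For a concrete failure, take $R=M_2(k)$, $a=E_{12}$, $r=E_{21}+E_{22}$. Then $e=E_{11}+E_{12}$, $b=E_{21}+E_{22}$, $ba=E_{22}$, and $e\cdot ba=E_{12}\neq 0$, so $e+ba$ is not even idempotent. Your ``main obstacle'' paragraph concedes this, but the remedies offered do not work. The expression $e+e(1-e)\cdots$ is just $e$, since $e(1-e)=0$, and ``apply the idempotent hypothesis to $axaR$'' is not an argument. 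Also, $J(R)=(0)$ plays no role in the orthogonality issue. It is automatic anyway, because the nonzero right ideal $J(R)$ would otherwise have to contain a nonzero idempotent.

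The missing idea is the standard correction of a one-sided orthogonal pair. Since $ba\cdot e=0$, replace $ba$ by $(1-e)ba$ and $b$ by $(1-e)b$. Set $e_{11}=e$, $e_{12}=ea$, $e_{21}=(1-e)b$ and $e_{22}=(1-e)ba$. Using only $ab=e$, $be=b$ and $ae=0$ (so $a(1-e)=a$), you get $e_{12}e_{21}=ea(1-e)b=eab=e$ and $e_{21}e_{12}=(1-e)bea=(1-e)ba$. You also get $e_{11}e_{21}=e(1-e)b=0$, $e_{22}e_{11}=(1-e)b(ae)=0$, $e_{12}e_{11}=e(ae)=0$ and $e_{12}e_{22}=ea(1-e)ba=eaba=ea$. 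The remaining products follow from these, so you have a genuine system of $2^2$ matrix units. All four are nonzero because $e=e_{12}e_{22}e_{21}\neq 0$. Then $E=e_{11}+e_{22}$ satisfies $ERE\cong M_2(T)$ with $T\cong eRe\neq 0$, and your final step goes through. You need either this computation or a citation of the Levitzki-type result the paper uses; as written, the proof has a hole exactly where its content lies.
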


\begin{proof}
Suppose on contrary that \(R\) is {\it not} reduced. Then, there its a non-zero element \(a \in R\) such that \(a^2 = 0\). Exploiting \cite[Theorem 2.1]{9}, there exists an idempotent \(e \in RaR\) such that \(eRe \cong M_2(T)\) for some non-trivial ring \(T\). However, Proposition \ref{1.1}(v) guarantees that $eRe$ is a $(2n-1)$-$UQ$ ring whence $M_2(T)$ is a $(2n-1)$-$UQ$ ring as well which is an absurd consulting with Proposition~\ref{2.10}, as suspected.
\end{proof}

\begin{corollary}\label{factor UQ}
Let $R$ be a ring, $I \subseteq J(R)$, and let $\overline{R} = R/I$ be an $n$-$UQ$ ring. Then, $R$ too is an $n$-$UQ$ ring.
\end{corollary}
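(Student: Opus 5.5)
Take $u\in U(R)$ with $\bar u=u+I$. Since $I\subseteq J(R)$, units lift and descend: $\bar u\in U(\overline R)$. By hypothesis $\bar u^n-\bar 1\in QN(\overline R)$. So it suffices to prove the key claim: if $I\subseteq J(R)$ and $\bar a\in QN(\overline R)$, then $a\in QN(R)$. Applying it to $a=u^n-1$ gives $u^n-1\in QN(R)$, which is exactly the statement.

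To prove the key claim, I would fix $x\in R$ with $xa=ax$ and aim to show $1-ax\in U(R)$. Reducing modulo $I$ gives $\bar x\bar a=\bar a\bar x$. Since $\bar a$ is quasi-nilpotent in $\overline R$, the element $\bar 1-\bar a\bar x$ is a unit of $\overline R$. Then $1-ax\in U(R)$ by the standard fact that an element of $R$ whose image in $R/I$ is invertible, with $I\subseteq J(R)$, is itself invertible. In detail, choose $y$ with $(1-ax)y\in 1+I$ and $y(1-ax)\in 1+I$. As $1+I\subseteq 1+J(R)\subseteq U(R)$, the element $1-ax$ has a right inverse and a left inverse, hence is a unit.

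The only step needing care is that the defining condition for $QN(\overline R)$ quantifies over elements commuting with $\bar a$ in $\overline R$. Elements commuting only modulo $I$ form a larger set, so we are quantifying over more test elements there. This is the favorable direction: every $x$ commuting with $a$ in $R$ gives an $\bar x$ commuting with $\bar a$, and the hypothesis covers it. I therefore expect no real obstacle; the proof is essentially the lifting of units modulo $J(R)$. It also does not need the earlier results on matrix rings or the Dedekind-finite property.
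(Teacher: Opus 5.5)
Your proof is correct and uses the same idea as the paper: quasi-nilpotence lifts from $R/I$ to $R$ when $I\subseteq J(R)$. The paper cites this as Lemma 4.1(1), while you prove it directly via lifting units modulo $J(R)$. The paper also lifts an arbitrary preimage $q$ and then needs the cited Lemma 4.1(2) to absorb the error term $p\in I$ in $u^n=1+(p+q)$. Because you apply the lifting claim to $u^n-1$ itself, that second step is unnecessary.
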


\begin{proof}
For any $u \in U(R)$, we have $\bar{u} \in U(\overline{R})$. Since $\overline{R}$ is an $n$-$UQ$ ring, we can write ${\bar{u}}^n = \bar{1} + \bar{q}$, where $\bar{q} \in QN(\overline{R})$. Now, \cite[Lemma 4.1(1)]{daoa} applies to get that $q \in QN(R)$. Therefore, $u^n - (1 + q) \in I$. Thus, there is $p \in I$ such that $u = 1 + (p + q)$. But, applying \cite[Lemma 4.1(2)]{daoa}, we detect $p + q \in QN(R)$. Consequently, $R$ is an $n$-$UQ$ ring, as required.
\end{proof}

\begin{lemma}\label{sum two unit}
Let $R$ be an $n$-$UQ$ ring, and let $\overline{R} = R/J(R)$. The following items hold:
\begin{enumerate}
\item	
For any $u, v \in U(R)$, it is true $u^n + v \neq 1$.
\item	
For any $\bar{u}^n, \bar{v} \in U(\overline{R})$, it is true $\bar{u}^n + \bar{v} \neq \bar{1}$.
\item
For any idempotent $e \in R$ and units $u,v \in U(eRe)$, it is true $u^n + v \neq e$.
\item	
For any $n>1$, there does not exist $0 \neq e^2=e \in R$ such that $eRe \cong M_n(S)$ for some ring $S$.
\end{enumerate}
\end{lemma}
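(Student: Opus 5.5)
Items (1)--(3) follow one pattern: argue by contradiction and show that some unit would have to be quasi-nilpotent, which is impossible in a nonzero ring. Indeed, if $w\in U(R)\cap QN(R)$, then taking $x=w^{-1}$ (which commutes with $w$) forces $1-ww^{-1}=0$ to be invertible, so $R=0$. For (1), suppose $u^n+v=1$ with $u,v\in U(R)$. Since $R$ is $n$-$UQ$, $u^n=1+q$ with $q\in QN(R)$. Then $v=1-u^n=-q$, so $q=-v$ is a unit lying in $QN(R)$, which contradicts $R\neq 0$. (The degenerate case $R=0$ is excluded implicitly.)

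For (2), I would lift to $R$. From $\bar u^n+\bar v=\bar 1$ we get $u^n+v\in 1+J(R)$, and since units lift modulo $J(R)$, both $u^n$ and $v$ may be taken in $U(R)$. Taking $u\in U(R)$ itself is the natural reading, because units of $R/J(R)$ lift to units of $R$. Then $u^n+v+j=1$ for some $j\in J(R)$. Since $v+j\in U(R)$, replacing $v$ by $v':=v+j$ gives $u^n+v'=1$, which contradicts (1). The only care needed is the phrasing ``$\bar u^n\in U(\overline R)$'': I will take a preimage $u$, note that $\bar u$ is a unit because $\bar u^n$ is, and hence $u\in U(R)$.

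For (3), Proposition~\ref{1.1}(v) says $eRe$ is $n$-$UQ$, and $e\neq 0$ whenever $U(eRe)$ is nonempty with $eRe\neq 0$. Applying (1) to the ring $eRe$, whose identity is $e$, gives $u^n+v\neq e$ directly.

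For (4), suppose $eRe\cong M_n(S)$ with $e\neq0$, so $S\neq 0$ and $eRe$ is $n$-$UQ$ by Proposition~\ref{1.1}(v). By (3) it suffices to find units $U,V\in M_n(S)$ with $U^n+V=I$. I would take $U$ to be the companion (cyclic permutation-type) matrix $P$ of $x^n-1$, signed so that $P^n=-I$. Concretely, $P$ sends $e_1\mapsto e_2\mapsto\cdots\mapsto e_n\mapsto -e_1$, so $P$ is invertible and $P^n=-I$. Then $V=I-P^n=2I$, and this is a unit only if $2$ is invertible, which is the main obstacle. A more robust choice is one that keeps $U^n$ and $I-U^n$ both units over any $S$, for instance a unipotent-times-permutation matrix, or in the case $n=2$ the matrices $U=\begin{pmatrix}0&1\\1&1\end{pmatrix}$ and $U^2=\begin{pmatrix}1&1\\1&2\end{pmatrix}$, for which $I-U^2=\begin{pmatrix}0&-1\\-1&-1\end{pmatrix}$ is invertible. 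For general $n$, I would use the companion matrix $C$ of a monic polynomial $f$ of degree $n$ with constant term $\pm1$, chosen so that $C^n=I-W$ with $W$ invertible. By Cayley--Hamilton, $C^n$ is a polynomial in $C$ of degree less than $n$, and I would aim to arrange that $I-C^n=g(C)$ with $g$ dividing a power of $x$ modulo $f$, making $g(C)$ a unit. Explicitly, take $f(x)=x^n-x-1$, so $C^n=C+I$ and $I-C^n=-C$, which is a unit; $C$ itself is a unit since $f(0)=-1$. Hence $U=C$ and $V=-C$ satisfy $U^n+V=I$, and this contradicts (3).
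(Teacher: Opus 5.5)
Your proof is correct. Items (1)--(3) are essentially the paper's argument. In (2) you absorb the $J(R)$-term into $v$ and quote (1), while the paper shows directly that $q\in U(R)+J(R)\subseteq U(R)$; this is the same idea.

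The real difference is in (4).
\begin{itemize}
\item The paper passes to a $2\times 2$ corner of $M_n(S)$, declares $n=2$ without loss of generality, and uses $F^2+(-F)=I_2$ for $F=\begin{pmatrix}1&1\\1&0\end{pmatrix}$. That step lowers the matrix size but not the $UQ$ exponent. The resulting relation $u^2+v=e$ therefore contradicts (3) only when the ring is $2$-$UQ$.
\item You stay in $M_n(S)$ and use the companion matrix $C$ of $x^n-x-1$. It satisfies $C^n=C+I$ and $C^{-1}=C^{n-1}-I$. So $C^n+(-C)=I$ is exactly the relation that (3) forbids, with the correct exponent.
\item For $n=2$ your $C$ is the paper's $F$ after swapping the two basis vectors. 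Your construction is thus the natural $n\times n$ version of the paper's trick.
\end{itemize}

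What your route buys is validity for every $n\geq 2$. What it does not give is a ``no $2\times 2$ corner'' statement for general $n$, and no argument can give one. For example, $M_2(\mathbb{F}_2)$ has unit group $\cong S_3$ of exponent $6$, so every unit satisfies $u^6=1$ and the ring is $6$-$UQ$. Hence (4) is only true when the matrix size equals the exponent. In that reading your argument proves it, whereas the paper's reduction breaks down for $n>2$.

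Two minor points:
\begin{itemize}
\item Since $S$ need not be commutative, verify $C^n=C+I$ in $M_n(\mathbb{Z})$ first. You can use Cayley--Hamilton there, or argue directly from $Ce_i=e_{i+1}$ for $i<n$ and $Ce_n=e_1+e_2$. Then map the identity into $M_n(S)$ via $\mathbb{Z}\to S$, whose image is central.
\item Your remark about $e\neq 0$ in (3) is garbled, since $U(eRe)$ always contains $e$. The actual point is that (3) fails for $e=0$ (take $u=v=0$), so it must be read with $e\neq 0$, just as (1) needs $R\neq 0$.
\end{itemize}
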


\begin{proof}
\begin{enumerate}
\item
Suppose $u^n + v = 1$. Since $R$ is an $n$-$UQ$ ring, there is $q \in QN(R)$ such that $u^n = 1 + q$. Then, $1 = u^n + v = 1 + q + v$ implying $q\in U(R)$, a false.
\item	
Suppose $\overline{u}^n + \overline{v} = \overline{1}$. We may assume $u, v \in U(R)$, and hence $u^n + v - 1 \in J(R)$. On the other hand, since $R$ is an $n$-$UQ$ ring, there is $q \in QN(R)$ such that $u^n = 1 + q$ yielding $q + v \in J(R)$. Thus, $q \in U(R) + J(R) \subseteq U(R)$, again a false.
\item	
We know that $eRe$ is also $n$-$UQ$ and $1_{eRe}=e$. Consequently, the result follows from (i).
\item	
Since $n>1$, it is well known that $M_n(S)$ contains a corner subring isomorphic to the $2 \times 2$ matrix ring.%So, $eRe$ contains a corner ring isomorphic to the $2 \times 2$ matrix ring.
Thus, without loss of generality, we can assume that $n=2$. Consider now the matrix identity in $M_2(S)$
	\[
	I_2 = \begin{pmatrix} 1 & 1 \\ 1 & 0 \end{pmatrix}^2 + \begin{pmatrix} -1 & -1 \\ -1 & 0 \end{pmatrix},
	\]
expressed as a sum of two invertible matrices. This would provide a solution to the equation $u^2 + v = e$ in $eRe \cong M_2(S)$, contradicting part (iii) and thus substantiating our statement.
\end{enumerate}

This finishes the proof.
\end{proof}

\begin{lemma}\label{exe}
Let $R$ be a potent $n$-$UQ$ ring and $\overline{R} = R/J(R)$. Then, the following points are true:
\begin{enumerate}
\item
For any idempotent $\bar{e} \in \overline{R}$ and units $\bar{u}, \bar{v} \in U(\bar{e}\overline{R}\bar{e})$, it must be that $\bar{u}^n + \bar{v} \neq \bar{e}$.
\item	
There are no idempotents $\bar{e} \in \overline{R}$ for which $\bar{e}\overline{R}\bar{e}$ is isomorphic to $M_2(S)$ for any ring $S$.
\end{enumerate}
\end{lemma}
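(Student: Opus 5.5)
Both parts will be reduced to Lemma \ref{sum two unit}, applied to a corner of $R$ itself, by lifting data from $\overline{R}$ to $R$. The tool is that $R$ is potent, so idempotents lift modulo $J(R)$. Given an idempotent $\bar e\in\overline{R}$, I first lift it to an idempotent $e=e^2\in R$. Then I use the standard facts $J(eRe)=eJ(R)e$ and
$$\bar e\,\overline{R}\,\bar e\;\cong\; eRe/eJ(R)e = eRe/J(eRe).$$
By Proposition \ref{1.1}(v), the corner $eRe$ is again an $n$-$UQ$ ring. (If $\bar e=\bar 0$ the corner ring is zero; I will assume $\bar e\neq \bar 0$, which is clearly the intended reading of the statement.)

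For (i), I take units $\bar u,\bar v\in U(\bar e\overline{R}\bar e)$. Since units lift modulo the Jacobson radical, I can choose $u,v\in U(eRe)$ mapping to them under the isomorphism above. Now suppose $\bar u^n+\bar v=\bar e$. Then $u^n+v-e\in J(eRe)$, which is exactly the situation of Lemma \ref{sum two unit}(ii) for the $n$-$UQ$ ring $eRe$, whose identity is $e$. Concretely, write $u^n=e+q$ with $q\in QN(eRe)$. Then $q+v\in J(eRe)$, so $q\in U(eRe)+J(eRe)\subseteq U(eRe)$. This contradicts $q$ being quasi-nilpotent in the nonzero ring $eRe$, since a quasi-nilpotent element cannot be a unit there.

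For (ii), suppose $\bar e\overline{R}\bar e\cong M_2(S)$ with $S\neq 0$. I will lift the system of $2\times 2$ matrix units from $\bar e\overline{R}\bar e=eRe/J(eRe)$ to a system of matrix units $\{f_{ij}\}$ in $eRe$ with $f_{11}+f_{22}=e$. This is the classical lifting of matrix units modulo a radical ideal over which idempotents lift, and here every idempotent of $eRe$ is an idempotent of $R$, so idempotents of $eRe$ lift modulo $J(eRe)$ because $R$ is potent. It then follows that $eRe\cong M_2(T)$ for $T$ the centralizer of the $f_{ij}$ in $eRe$, as in the paragraph before Proposition \ref{2.11}. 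Since $e\neq 0$, this contradicts Lemma \ref{sum two unit}(iv).

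Lifting the matrix units is the main obstacle. The plan is to lift $\bar e_{11}$ to an idempotent $f_{11}$ and set $f_{22}=e-f_{11}$. Next I choose preimages $a\in f_{12}$-position and $b\in f_{21}$-position, replacing them by $f_{11}af_{22}$ and $f_{22}bf_{11}$. Then $f_{11}abf_{11}\in f_{11}+J$, so $ab$ is a unit in $f_{11}Rf_{11}$, and I correct $b$ by the inverse of $ab$ in that corner to get $ab=f_{11}$. Finally $ba$ is an idempotent of $f_{22}Rf_{22}$ congruent to $f_{22}$ modulo the radical, hence equal to $f_{22}$.

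A fallback avoiding lifting is to argue directly inside $M_2(S)$ using part (i). This would need a unit $w$ with $I-w^n$ a unit, which is not obvious for every $n$, so I prefer the lifting route.
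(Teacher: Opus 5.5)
Your proof of (i) is correct and follows the paper's argument: lift $\bar e$ by potency, use $\bar e\,\overline{R}\,\bar e\cong eRe/J(eRe)$, lift the units, and apply Lemma~\ref{sum two unit}(ii) to the $n$-$UQ$ corner $eRe$. Excluding $\bar e=\bar 0$ is a needed correction.

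Part (ii) has a genuine gap. After lifting the matrix units you invoke Lemma~\ref{sum two unit}(iv) for the $n$-$UQ$ ring $eRe$, but that lemma is only proved for $n=2$. Its proof writes $e=w^2+v$ with $w=\begin{pmatrix}1&1\\1&0\end{pmatrix}$ and $v=e-w^2$ both units. This contradicts Lemma~\ref{sum two unit}(iii), $u^n+v\neq e$, only when $n=2$. For general $n$ you need exactly what you called ``not obvious'' in your fallback paragraph: a unit $w\in M_2(S)$ with $I-w^n$ a unit. Passing to $eRe$ does not remove that requirement; it only hides it inside the cited lemma. The paper's proof of (ii) combines (i) with Lemma~\ref{sum two unit}(iv), i.e.\ it uses that identity inside $\bar e\,\overline{R}\,\bar e\cong M_2(S)$ against (i), and so has the same defect. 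For $n=2$ that route works directly in $\overline{R}$, so your lifting of matrix units is not needed. Separately, your reason that idempotents lift modulo $J(eRe)$ does not follow: a lift supplied by potency of $R$ need not lie in $eRe$, and the standard conjugation argument (by a unit in $1+J(R)$) is required.

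No argument can close this gap, because (ii) fails for general $n$. Take $R=M_2(\mathbb{F}_2)$, which is semisimple, hence potent with $J(R)=0$ and $\overline{R}=R$. Its unit group $\mathrm{GL}_2(\mathbb{F}_2)\cong S_3$ consists of $1$, two elements of order $3$, and three involutions, each of the form $1+N$ with $N^2=0$. Hence, whenever $3\mid n$, every unit $u$ satisfies $u^n-1\in\{0\}\cup Nil(R)\subseteq QN(R)$. So $R$ is $n$-$UQ$ (for instance $n=3$ or $n=6$), yet $\bar e=1$ gives $\bar e\,\overline{R}\,\bar e=M_2(\mathbb{F}_2)$. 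The same ring refutes Lemma~\ref{sum two unit}(iv) for these $n$. Your argument for (ii) is valid only for $n=2$ (with $S\neq 0$), and the statement must be restricted accordingly.
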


\begin{proof}
\begin{enumerate}
\item
For any idempotent $\bar{e} \in \overline{R}$, we can lift it to an idempotent $e \in R$, because potentness gives that all idempotents can be lifted modulo the Jacobson radical. Also, the corner ring satisfies the isomorphism $$\bar{e}\overline{R}\bar{e} \cong eRe/J(eRe).$$ As $eRe$ inherits the $n$-$UQ$ property of $R$, the result follows immediately from Lemma \ref{sum two unit}(ii).
\item	
Follows directly via a combination of (i) and Lemma \ref{sum two unit}(iv).
\end{enumerate}

This completes the proof.
\end{proof}

Suppose \( R \) is a ring and suppose \( M \) is a bi-module over \( R \). The trivial extension of \( R \) by \( M \), denoted hereafter as \( T(R, M) \), is defined by the equality
\[
T(R, M) = \{ (r, m) \mid r \in R,\, m \in M \},
\]
with addition given component-wise and multiplication given by
\[
(r, m)(s, n) = (rs, rn + ms).
\]
This construction can be identified with the subring
\[
\left\{ \begin{pmatrix} r & m \\ 0 & r \end{pmatrix} \mid r \in R,\ m \in M \right\}
\]
of the {\it formal matrix ring} \( \begin{pmatrix} R & M \\ 0 & R \end{pmatrix} \).

Moreover, when \( M = R \), we have \( T(R, R) \cong R[x]/\langle x^2 \rangle \), that is the ring consisting of {\it dual members} over \( R \).

Additionally, the set of units of the trivial extension \( T(R, M) \) is defined as
\[
U(T(R, M)) = T(U(R), M).
\]

We standardly designate by \( R[[x]] \) the ring of {\it formal power series} over \( R \).

\medskip

We are now positioned to establish the following statements.

\begin{lemma}\cite[Lemma 2.8]{daoa}\label{basic property}
For rings $R,S$, an $(R,S)$-bi-module $N$ and an $R$-bi-module $M$, the following containments are fulfilled:
\begin{enumerate}
\item	
$\{(r,m) \in T(R,M) \mid r \in QN(R), m \in M\} \subseteq QN(T(R,M))$.
\item	
$\left\{\begin{pmatrix} r & m \\ 0 & s \end{pmatrix} \mid r \in QN(R), s \in QN(S), m \in N\right\} \subseteq QN\left(\begin{pmatrix} R & N \\ 0 & S \end{pmatrix}\right)$.
\item	
$\{(a_{ij}) \in T_n(R) \mid a_{ii} \in QN(R) \text{ for } 1 \leq i \leq n\} \subseteq QN(T_n(R))$.
\item	
$\{a_0 + \cdots + a_{n-1}x^{n-1} \in R[x]/\langle x^n \rangle \mid a_0 \in QN(R)\} \subseteq QN(R[x]/\langle x^n \rangle)$.
\item	
$\{a_0 + a_1x + a_2x^2 + \cdots \in R[[x]] \mid a_0 \in QN(R)\} \subseteq QN(R[[x]])$.
\end{enumerate}
\end{lemma}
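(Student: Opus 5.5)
We plan to verify each containment directly from the definition: an element $a$ of a ring $A$ lies in $QN(A)$ if and only if $1-ax\in U(A)$ for every $x\in A$ commuting with $a$. Each ring in the statement is built from its diagonal or constant-term part by adjoining something nilpotent or $J$-like. Our strategy is therefore to show that commuting with $a$ forces the diagonal (or constant) part of $x$ to commute with the diagonal part of $a$. Quasi-nilpotence of that diagonal part then makes the diagonal part of $1-ax$ invertible, and the standard unit criteria in these rings finish the argument.

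\textbf{Item (ii).} Let $a=\begin{pmatrix} r & m \\ 0 & s \end{pmatrix}$ and let $x=\begin{pmatrix} r' & m' \\ 0 & s' \end{pmatrix}$ commute with $a$. Comparing diagonal entries of $ax=xa$ gives $rr'=r'r$ and $ss'=s's$. Since $r\in QN(R)$ and $s\in QN(S)$, both $1-rr'\in U(R)$ and $1-ss'\in U(S)$. Now
\[
1-ax=\begin{pmatrix} 1-rr' & * \\ 0 & 1-ss' \end{pmatrix},
\]
and a formal upper triangular matrix with invertible diagonal entries is invertible: its inverse is $\begin{pmatrix} \alpha^{-1} & -\alpha^{-1}*\beta^{-1} \\ 0 & \beta^{-1} \end{pmatrix}$, where $\alpha=1-rr'$ and $\beta=1-ss'$. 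This proves (ii).

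\textbf{Item (i).} This follows from the same computation, using the identification of $T(R,M)$ with the subring of $\begin{pmatrix} R & M \\ 0 & R \end{pmatrix}$ described above together with $U(T(R,M))=T(U(R),M)$. Alternatively, we simply note that $(1-rr',*)$ is a unit because its first coordinate is.

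\textbf{Item (iii).} We argue by induction on $n$. We write $T_n(R)$ as the formal triangular ring $\begin{pmatrix} R & R^{1\times(n-1)} \\ 0 & T_{n-1}(R) \end{pmatrix}$. The inductive hypothesis places the lower block in $QN(T_{n-1}(R))$, and then (ii) applies. Equivalently, we can argue directly: commuting forces $a_{ii}x_{ii}=x_{ii}a_{ii}$ for each $i$, and an upper triangular matrix with unit diagonal entries is invertible.

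\textbf{Items (iv) and (v).} Take $f=a_0+a_1x+\cdots$ and $g=b_0+b_1x+\cdots$ with $fg=gf$. Comparing constant terms gives $a_0b_0=b_0a_0$, hence $1-a_0b_0\in U(R)$. The element $1-fg$ has constant term $1-a_0b_0$, and a power series (respectively a truncated polynomial in $R[x]/\langle x^n\rangle$) with invertible constant term is invertible. Here we use that $x$ is central and that the ideal generated by $x$ lies in the Jacobson radical. For (iv) we may alternatively note that $R[x]/\langle x^n\rangle$ embeds in $T_n(R)$ as Toeplitz matrices, but this is unnecessary given the direct argument.

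The only genuine point in all of this is the observation that the commuting condition passes to the diagonal or constant-term parts. Everything else is the routine invertibility criterion for triangular-type rings, so we do not expect a real obstacle.
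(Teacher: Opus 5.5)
Your proof is correct in all five parts. The key point is that $ax=xa$ forces the diagonal (respectively constant-term) components to commute. Combined with the standard unit criteria for formal triangular rings, $T(R,M)$, $T_n(R)$, $R[x]/\langle x^n\rangle$ and $R[[x]]$, that is all that is needed. Your inductive reduction of (iii) to (ii) via $T_n(R)\cong\begin{pmatrix} R & R^{1\times(n-1)}\\ 0 & T_{n-1}(R)\end{pmatrix}$ is also sound.

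The paper gives no proof of its own here; the lemma is simply quoted from \cite[Lemma 2.8]{daoa}, so there is no in-paper route to compare against. Your direct verification from the definition serves as a self-contained proof of it.
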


An immediate consequence asserts the following.

\begin{corollary} \label{cor five}
Let $R$ and $S$ be rings, $N$ an $(R,S)$-bi-module and $M$ an $R$-bi-module. Then, the following issues hold:
\begin{enumerate}
\item	
The trivial extension $T(R,M)$ is $n$-$UQ$ if, and only if, $R$ is $n$-$UQ$.
\item	
The formal triangular matrix ring $\begin{pmatrix} R & N \\ 0 & S \end{pmatrix}$ is $n$-$UQ$ if, and only if, both $R$ and $S$ are $n$-$UQ$.
\item	
For any $n \geq 1$, the triangular matrix ring $T_n(R)$ is $n$-$UQ$ if, and only if, $R$ is $n$-$UQ$.
\item	
For any $n \geq 1$, the quotient ring $R[x]/\langle x^n \rangle$ is $n$-$UQ$ if, and only if, $R$ is $n$-$UQ$.
\item	
The power series ring $R[[x]]$ is $n$-$UQ$ if, and only if, $R$ is $n$-$UQ$.
\end{enumerate}
\end{corollary}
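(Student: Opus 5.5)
The common pattern for all five items is the following. Each ring $A$ in question has a surjective ring homomorphism $\pi: A\to R$ (or onto $R\times S$ in (ii)) with three properties: it admits a ring section $\iota$ ($R$ embedded as constant or diagonal elements); a unit of $A$ is exactly an element whose image under $\pi$ is a unit; and every element whose image lies in $QN(R)$ is quasi-nilpotent in $A$, by Lemma~\ref{basic property}. The maps are:
\begin{itemize}
\item $(r,m)\mapsto r$ for $T(R,M)$;
\item $\begin{pmatrix} r & m\\ 0& s\end{pmatrix}\mapsto (r,s)$ for the formal triangular matrix ring;
\item $(a_{ij})\mapsto (a_{11},\dots,a_{nn})$ for $T_n(R)$;
\item $a_0+a_1x+\cdots\mapsto a_0$ for $R[x]/\langle x^n\rangle$ and for $R[[x]]$.
\end{itemize}
The unit descriptions are standard: $U(T(R,M))=T(U(R),M)$ as recalled above; a triangular matrix is invertible if and only if its diagonal entries are units; a power series is invertible if and only if its constant term is a unit.

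\emph{Sufficiency.} Assume $R$ (and $S$) is $n$-$UQ$ and let $u\in U(A)$. Since $\pi$ is a ring homomorphism, $\pi(u^n-1)=\pi(u)^n-1$. For instance, in $T_n(R)$ the diagonal of $u^n-1$ is $(u_{11}^n-1,\dots,u_{nn}^n-1)$. Each $u_{ii}$ is a unit of $R$, so each diagonal entry lies in $QN(R)$ by hypothesis. Lemma~\ref{basic property}(iii) then gives $u^n-1\in QN(T_n(R))$. The other cases are identical, using the corresponding parts (i), (ii), (iv), (v) of Lemma~\ref{basic property}; for (ii) we use that the diagonal entries lie in $QN(R)$ and $QN(S)$ separately.

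\emph{Necessity.} Assume $A$ is $n$-$UQ$. I would apply Proposition~\ref{1.1}(iv), which requires $\iota(R)$ to be rationally closed in $A$, i.e.\ $U(A)\cap\iota(R)=\iota(U(R))$. This follows from the unit description: if $\iota(r)$ is a unit of $A$, then $r=\pi\iota(r)$ is a unit of $R$, whose inverse is sent by $\iota$ to an inverse of $\iota(r)$. Hence $R$ is $n$-$UQ$.

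For (ii) and (iii) there is an alternative that avoids rational closure. The diagonal idempotents $e=\begin{pmatrix}1&0\\0&0\end{pmatrix}$ (resp.\ $e_{ii}$) give corner rings $eAe\cong R$ (resp.\ $S$), so Proposition~\ref{1.1}(v) applies directly. For (ii), rational closure of the diagonal copy of $R\times S$ combined with Proposition~\ref{1.1}(ii) also works.

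The only step that needs care is the claim that $\iota(R)$ is rationally closed, and specifically the unit descriptions for $R[[x]]$ and $T_n(R)$. For $R[[x]]$, the inverse of a series with unit constant term is built recursively. For $T_n(R)$, if the diagonal is invertible, factor the matrix as a diagonal unit times a unipotent matrix; conversely, if the matrix is invertible, comparing diagonal entries of $uu^{-1}=u^{-1}u=I$ shows each $u_{ii}$ is a unit. With these in hand, everything else is immediate, which matches the corollary being labelled an immediate consequence.
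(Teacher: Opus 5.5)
Your proposal is correct and follows the paper's proof. Sufficiency comes from the unit description together with Lemma~\ref{basic property}, and necessity from Proposition~\ref{1.1}(iv) applied to the embedded copy of $R$; the paper writes this out only for $T(R,M)$ and calls the remaining cases similar, while your explicit rational-closure check and the corner alternative via Proposition~\ref{1.1}(v) just fill in details the paper leaves implicit.
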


\begin{proof}
We need to establish only case (i), because the remaining cases follow from Lemma \ref{basic property} using a similar line of reasoning.

To that goal, assume that \( T(R, M) \) is an $n$-$UQ$ ring. Since \( R \cong T(R, 0) \), we may interpret \( R \) as a subring of \( T(R, M) \). Hence, Proposition \ref{1.1}(iv) works to find that \( R \) is an $n$-$UQ$ ring.

Conversely, suppose that \( R \) is an $n$-$UQ$ ring, and choose \( (u, m) \in U(T(R, M)) \). Then, \( u \in U(R) \). Since \( R \) is $n$-UQ, we have \( 1 - u^2 \in QN(R) \). Employing Lemma \ref{basic property}, we obtain
\[
(1, 0) - (u, m)^n = (1 - u^n, *) \in T(QN(R), M) \subseteq QN(T(R, M)).
\]
That is why, \( T(R, M) \) is an $n$-$UQ$ ring, as wanted.
\end{proof}

Let \( Nil_{*}(R) \) denote the {\it prime radical} (also known as the {\it lower nil-radical}) of a ring \( R \) that means the intersection of all {\it prime ideals} of \( R \). It is globally known that \( Nil_{*}(R) \) is a nil-ideal of \( R \).

On the same hand, a ring \( R \) is said to be {\it 2-primal} if its prime radical coincides with the set of all nilpotent elements of \( R \) or, in other words, if \( Nil_{*}(R) = Nil(R) \). It is well-established that both reduced rings and commutative rings are non-trivial examples of 2-primal rings.

As usual, we denote by \( R[x] \) the {\it ring of polynomials} over \( R \).

\medskip

In accordance with our preceding result, a ring $R$ is $n$-UQ absolutely when the power series ring $R[[x]]$ is $n$-UQ. This readily leads us to the question: Under what extra circumstances the polynomial ring $R[x]$ also possesses the $n$-UQ property, provided that $R$ does? In what follows, we aim to address this query remembering firstly a well-known affirmation.

\begin{lemma}\cite[Corollary 3.2.]{daoa}\label{cor 2 primal}
Let $R$ be a 2-primal ring. Then, the following equality is true:
\[ J(R[x]) = QN(R[x]) = \text{Nil}(R)[x] = \text{Nil}_*(R)[x] = \text{Nil}_*(R[x]) \]
\end{lemma}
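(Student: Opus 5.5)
The plan is to prove the chain of inclusions
\[
QN(R[x]) \subseteq \text{Nil}_*(R)[x] = \text{Nil}_*(R[x]) \subseteq J(R[x]) \subseteq QN(R[x]),
\]
and then to note separately that $\text{Nil}(R)[x]=\text{Nil}_*(R)[x]$. That last equality is immediate from the 2-primal hypothesis $\text{Nil}(R)=\text{Nil}_*(R)$.

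The easy parts come first.
\begin{itemize}
\item The inclusion $J(R[x]) \subseteq QN(R[x])$ holds in any ring, as recalled in the Introduction.
\item The equality $\text{Nil}_*(R[x]) = \text{Nil}_*(R)[x]$ is the classical description of the prime radical of a polynomial ring (McCoy's theorem), and I would simply quote it.
\item The inclusion $\text{Nil}_*(R[x]) \subseteq J(R[x])$ holds because the prime radical is a nil ideal, and every nil ideal lies in the Jacobson radical.
\end{itemize}

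The substantive step is $QN(R[x]) \subseteq \text{Nil}_*(R)[x]$. Let $f \in QN(R[x])$. The indeterminate $x$ is central in $R[x]$, so it commutes with $f$, and the definition of quasi-nilpotence gives $1 - xf \in U(R[x])$. Set $\bar R = R/\text{Nil}_*(R)$. Since $\text{Nil}_*(R)[x]$ is an ideal of $R[x]$, we have $R[x]/\text{Nil}_*(R)[x] \cong \bar R[x]$, and the image $1 - x\bar f$ is a unit of $\bar R[x]$.

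Because $R$ is 2-primal, $\bar R = R/\text{Nil}(R)$ is reduced. Then $\bar R[x]$ is reduced as well, and every unit of $\bar R[x]$ is a constant. To see this, recall that reduced rings are Armendariz. If $gh=1$ in $\bar R[x]$, then $g_i h_j = 0$ whenever $i+j>0$. From this, $g_i = g_i (g_0 h_0) = g_i g_0 h_0$, and in a reduced ring $g_i h_0=0$ forces $h_0 g_i = 0$ and hence $g_0 h_0 g_i$-type products vanish, so every higher coefficient $g_i$ is $0$.

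Applying this to $1 - x\bar f$, all coefficients of positive degree vanish, so $\bar f = 0$, that is, $f \in \text{Nil}_*(R)[x]$. This closes the chain, and all the displayed sets coincide.

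The main obstacle is justifying that units of $\bar R[x]$ are constants; I plan to rely on the Armendariz property of reduced rings. If a self-contained route is preferred, one can instead argue prime by prime. Pass to $R/P$ for each minimal prime $P$. This is a domain, since for a 2-primal ring $R/P$ is reduced and prime. Over a domain the degree argument shows that units of $(R/P)[x]$ are constants, so each coefficient of $f$ lies in every minimal prime $P$, hence in $\text{Nil}_*(R)$.
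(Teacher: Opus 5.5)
The paper does not prove this lemma; it quotes it from \cite[Corollary 3.2]{daoa}, so there is no in-paper argument to compare with. Your overall plan is sound. The chain $QN(R[x])\subseteq \text{Nil}_*(R)[x]=\text{Nil}_*(R[x])\subseteq J(R[x])\subseteq QN(R[x])$, together with $\text{Nil}(R)=\text{Nil}_*(R)$, gives all the equalities. The key observation is also right: $x$ is central, so $f\in QN(R[x])$ forces $1-xf\in U(R[x])$. All steps except one are standard and correctly handled.

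The weak point is the step you flagged: that units of $\bar R[x]$ are constants when $\bar R$ is reduced. Your Armendariz sketch does not work as written. The Armendariz property concerns products $gh=0$, not $gh=1$, so ``$gh=1$ implies $g_ih_j=0$ for $i+j>0$'' does not follow from it; that assertion is essentially the conclusion you want. (Granting it, the rest of your sketch is fine, since $h_0g_i=0$ gives $g_i=g_0h_0g_i=0$.)

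Your fallback is valid, but it assumes without proof that $R/P$ is reduced for a minimal prime $P$ of a 2-primal ring, i.e.\ that $P$ is completely prime. That is exactly Shin's theorem, or equivalently the fact that a reduced ring is a subdirect product of domains. It is true but nontrivial, and should be cited rather than asserted.

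A self-contained fix is short. Let $S$ be reduced and suppose $gh=1$ in $S[x]$, with $g=\sum_{i=0}^m g_ix^i$, $g_m\neq 0$, $m\ge 1$, and $h=\sum_{j=0}^n h_jx^j$.
\begin{enumerate}
\item For $k=0,\dots,n$ the coefficient of $x^{m+n-k}$ in $gh$ is $0$, since $m+n-k\ge 1$.
\item Multiply that coefficient on the right by $g_m$. Use that $ab=0\Rightarrow ba=0$ in a reduced ring, together with the inductive hypothesis $g_mh_{n-l}=0$ for $l<k$. This gives $g_mh_{n-k}g_m=0$.
\item Hence $(g_mh_{n-k})^2=0$, so $g_mh_{n-k}=0$ for every $k$.
\item In particular $h_0g_m=0$, so $g_m=g_0h_0g_m=0$, a contradiction.
\end{enumerate}

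Alternatively, in the spirit of this paper, you can quote \cite[Theorem 2.5]{Chenpr}, which the paper invokes in the next lemma. It says that for 2-primal $R$ every unit of $R[x]$ has all positive-degree coefficients in $\text{Nil}_*(R)$. Applied to $1-xf$, this gives $f\in\text{Nil}_*(R)[x]$ at once.
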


We now menage to prove the following slightly curious assertion.

\begin{lemma}
Let $R$ be a 2-primal ring. Then, the following four conditions are equivalent:
\begin{enumerate}
\item
$R$ is an $n$-$UU$ ring.
\item
$R[x]$ is an $n$-$UQ$ ring.
\item
$R[x]$ is an $n$-$UJ$ ring.
\item
$R[x]$ is an $n$-$UU$ ring.
\end{enumerate}
\end{lemma}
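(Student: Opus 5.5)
Everything rests on Lemma \ref{cor 2 primal}: for a 2-primal ring $R$ we have $J(R[x]) = QN(R[x]) = Nil(R)[x] = Nil_*(R[x])$. Since $Nil_*(R[x])$ is a nil ideal, $Nil(R)[x] \subseteq Nil(R[x])$. Conversely, $Nil(R[x]) \subseteq QN(R[x])$ always holds. So in $R[x]$ the three sets $J(R[x])$, $QN(R[x])$ and $Nil(R[x])$ all coincide with $Nil(R)[x]$. This immediately gives (ii)$\Leftrightarrow$(iii)$\Leftrightarrow$(iv): each of these three conditions says that $u^n - 1$ lies in this common set for every $u \in U(R[x])$.

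It then suffices to show (i)$\Leftrightarrow$(iv). First I will describe $U(R[x])$. Passing to $\overline{R} = R/Nil_*(R)$, which is reduced because $R$ is 2-primal, the polynomial ring $\overline{R}[x]$ is reduced and $U(\overline{R}[x]) = U(\overline{R})$. If $f = a_0 + a_1x + \cdots \in U(R[x])$, its image in $\overline{R}[x]$ is a unit, hence a constant. Therefore $a_i \in Nil_*(R)$ for $i \geq 1$, and $a_0 \in U(R)$ because $Nil_*(R) \subseteq J(R)$. Conversely, any such polynomial is $a_0 + g$ with $g \in Nil_*(R)[x] = J(R[x])$, so it is a unit. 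Hence
\[ U(R[x]) = U(R) + Nil(R)[x]. \]

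For (iv)$\Rightarrow$(i): given $u \in U(R)$, we have $u \in U(R[x])$, so $u^n - 1 \in Nil(R[x]) \cap R = Nil(R)$. For (i)$\Rightarrow$(iv): take $f = u + g$ with $u \in U(R)$ and $g \in Nil(R)[x]$. Since $Nil(R)[x] = Nil_*(R[x])$ is an ideal, expanding gives $f^n = u^n + h$ with $h \in Nil(R)[x]$. Hence $f^n - 1 = (u^n - 1) + h$. By (i), $u^n - 1 \in Nil(R) \subseteq Nil(R)[x]$, so $f^n - 1 \in Nil(R)[x] = Nil(R[x])$, which is (iv).

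The main obstacle is the description of $U(R[x])$, specifically the claim that the non-constant coefficients of a unit are nilpotent. This is where 2-primality is used, through the reduction to the reduced ring $R/Nil_*(R)$, whose polynomial ring has only constant units. I will justify that last fact by the standard argument in reduced rings: if $fg = 1$, compare top-degree coefficients, using that reduced rings are reversible, so that $ab = 0$ implies $ba = 0$ and $aRb = 0$.
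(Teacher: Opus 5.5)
Your proof is correct and follows the paper's argument. Both rest on two facts for 2-primal $R$: the equality $J(R[x]) = QN(R[x]) = Nil(R)[x] = Nil_*(R[x])$, and the description of a unit of $R[x]$ as having unit constant term with all higher coefficients in $Nil_*(R)$, which gives (i)$\Leftrightarrow$(iv) by the same computation modulo the ideal $Nil_*(R)[x]$. The differences are minor: you note that $Nil(R[x])$ also equals this common set, so (ii), (iii) and (iv) are literally the same condition rather than being linked by a chain of implications, and you prove the description of $U(R[x])$ directly via the reduced ring $R/Nil_*(R)$, whereas the paper quotes it from the literature.
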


\begin{proof}
Both implications (iii) \( \Rightarrow \) (ii) and (iv) \( \Rightarrow \) (ii) are pretty simple, since we always have that \( \text{Nil}(R), J(R) \subseteq QN(R) \). The equivalence (ii) \( \Leftrightarrow \) (iii) follows immediately from Lemma~\ref{cor 2 primal}.

(i) \( \Rightarrow \) (iv): Write \( u = \sum_{i=0}^n u_i x^i \in U(R[x]) \). Since \( R \) is 2-primal, \cite[Theorem 2.5]{Chenpr} enables us that \( u_0 \in U(R) \) and \( u_i \in \text{Nil}_*(R) \) for all \( 1 \leq i \leq n \).
As \( R \) is $n$-$UU$, we derive \( 1 - u_0^n \in \text{Nil}(R) \). So, in view of the ideal property of \( \text{Nil}_*(R) \), it follows that
\[
1 - u^n \in (1 - u_0^n) + \text{Nil}_*(R)[x]x \subseteq \text{Nil}_*(R)[x] = \text{Nil}_*(R[x]) \subseteq \text{Nil}(R[x]),
\]
as required.

(iv) \( \Rightarrow \) (i): Given \( u \in U(R) \subseteq U(R[x]) \), then \( 1 - u^n \in \text{Nil}(R[x]) \cap R \subseteq \text{Nil}(R) \). Hence, \( R \) is an $n$-$UU$ ring.

(ii) \( \Rightarrow \) (i): Chosen \( u \in U(R) \subseteq U(R[x]) \), then \( 1 - u^n \in QN(R[x]) = \text{Nil}_*(R)[x] \). Thus, \( 1 - u^n \in \text{Nil}_*(R) = \text{Nil}(R) \).

The claim sustained after all.
\end{proof}

\section{Main Results on $n$-$UQ$ Rings}

This section explores how $n$-UQ rings are related to other important classes of rings such as regular rings, semi-potent rings and potent rings. For clarity, recall that a ring $R$ is called {\it semi-potent} if every one-sided ideal outside $J(R)$ contains a non-zero idempotent. Furthermore, a semi-potent ring is said to be {\it potent} when all idempotents can be lifted modulo $J(R)$.

\medskip

We are now in a position to show validity of the following result.

\begin{theorem}\label{semipotent}
Let $R$ be a semi-potent ring. The following are equivalent:
\begin{enumerate}
\item	
The ring $R/J(R)$ is a (2n-1)-UQ ring.
\item	
The ring $R/J(R)$ has the identity $x^{2n} = x$.
\item	
The ring $R$ is a (2n-1)-UJ ring.
\item	
The ring $R/J(R)$ is a (2n-1)-UU ring.
\end{enumerate}
\end{theorem}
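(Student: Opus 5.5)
The plan is to set $\overline{R}=R/J(R)$ and close the cycle (i) $\Rightarrow$ (iii) $\Rightarrow$ (ii) $\Rightarrow$ (iv) $\Rightarrow$ (i). Three of these steps are formal. The real work is in showing that $QN(\overline{R})=(0)$ and in the structural step (iii) $\Rightarrow$ (ii).

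First, I will note that $\overline{R}$ inherits semi-potency from $R$: the preimage of a nonzero one-sided ideal of $\overline{R}$ is a one-sided ideal of $R$ not contained in $J(R)$. It therefore contains a nonzero idempotent, and that idempotent does not lie in $J(R)$, so its image is a nonzero idempotent. Also $J(\overline{R})=(0)$. Hence under (i), Lemma \ref{2.12} applies and gives that $\overline{R}$ is reduced, so in particular abelian.

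Next I will show $QN(\overline{R})=(0)$. Suppose $0\neq q\in QN(\overline{R})$. By semi-potency, $q\overline{R}$ contains a nonzero idempotent $e=qs$, and $e$ is central. Then $(qe)(se)=e$, so $qe\in U(e\overline{R}e)$ (the corner is reduced, hence Dedekind-finite). Since $e$ is central, every element of $e\overline{R}e$ commuting with $qe$ also commutes with $q$. Hence $qe\in QN(e\overline{R}e)$. Taking $x=(qe)^{-1}$, the element $e-(qe)x=0$ would have to be invertible in $e\overline{R}e$, which forces $e=0$, a contradiction. So (i) says precisely that $\bar u^{2n-1}=\bar 1$ for all $\bar u\in U(\overline{R})$. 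This gives (iv) trivially. It also gives (iii), since every $u\in U(R)$ satisfies $\bar u\in U(\overline{R})$, hence $u^{2n-1}-1\in J(R)$.

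For (ii) $\Rightarrow$ (iv): the identity $x^{2n}=x$ applied to a unit gives $u^{2n-1}=1$, so $u^{2n-1}-1=0$ is nilpotent. For (iv) $\Rightarrow$ (i): this holds because $Nil\subseteq QN$.

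The main obstacle is (iii) $\Rightarrow$ (ii). Since units lift modulo $J(R)$, (iii) makes $\overline{R}$ a semi-potent, semiprimitive ring in which every unit satisfies $u^{2n-1}=1$. It is then reduced, by Lemma \ref{2.12} applied with $QN\supseteq\{0\}$, and hence abelian. I would first try to invoke the known structural theorem for semi-potent $(2n-1)$-$UJ$ rings from \cite{3} (see also \cite{Dannew}), which yields exactly the identity $x^{2n}=x$ on $R/J(R)$. If a self-contained argument is wanted instead, I would aim to prove that $\overline{R}$ is strongly regular. For $a\neq0$, take a central idempotent $e\in a\overline{R}$; then $ae$ is a unit of $e\overline{R}e$. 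Applying the unit identity to $ae+(1-e)$ gives $(ae)^{2n}=ae$. Finally, a maximality/Zorn argument on the orthogonal central idempotents produced this way would show that $a$ itself satisfies $a^{2n}=a$; the semiprimitivity of $\overline{R}$ is what kills the remaining part $a-a^{2n}$.
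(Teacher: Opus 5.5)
Your proposal has a gap at (iii) $\Rightarrow$ (ii), the step you yourself identify as the main one. The reductions around it are sound. In particular, your observation that $QN(\overline{R})=(0)$ once $\overline{R}$ is reduced and semi-potent does not appear in the paper. It shows that (i) and (iv) both just say $\bar u^{2n-1}=\bar 1$ for all $\bar u\in U(\overline{R})$, and it gives (i) $\Rightarrow$ (iii) cleanly.

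Your first option for (iii) $\Rightarrow$ (ii), invoking a structural theorem from the $n$-$UJ$ literature that ``yields exactly the identity $x^{2n}=x$'', is not an argument: it is essentially the statement being proved.

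Your fallback does not close the step either. From a central idempotent $e\in a\overline{R}$ you correctly obtain $e(a-a^{2n})=0$. But the ``maximality/Zorn argument on orthogonal central idempotents'' has nothing to work with. An infinite orthogonal family of idempotents has no sum in a general ring, so there is no ``remaining part'' of $a$ to isolate. Moreover, $J(\overline{R})=(0)$ by itself gives no control over $a-a^{2n}$. What kills $a-a^{2n}$ is semi-potency applied to the right ideal it generates, and your sketch never does that.

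The missing idea is to choose the idempotent inside $(a-a^{2n})\overline{R}$, not an arbitrary one in $a\overline{R}$. Suppose $d:=a-a^{2n}\neq 0$.
\begin{enumerate}
\item Semi-potency of $\overline{R}$ gives $0\neq e=db$, and $e$ is central because $\overline{R}$ is reduced.
\item Since $d=a(1-a^{2n-1})$, we have $e\in a\overline{R}$.
\item Your own local computation now applies: $ea\in U(e\overline{R}e)$, and the unit identity applied to $ea+(1-e)$ gives $(ea)^{2n-1}=e$. Hence $ed=0$.
\item Therefore $e=e^2=(ed)b=0$, a contradiction.
\end{enumerate}
No Zorn argument is needed.

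This is precisely the paper's argument for (i) $\Rightarrow$ (ii). The paper instead writes $(ea)^{2n-1}+e(1-a^{2n-1})=e$ with both summands units of $e\overline{R}e$ and invokes Lemma~\ref{sum two unit}. With your reduction $QN(\overline{R})=(0)$ already in hand, the contradiction is even more direct.
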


\begin{proof}
The implications (iii) $\Rightarrow$ (iv) $\Rightarrow$ (i) are self-evident.

(i) $\Rightarrow$ (ii): First, in virtue of Lemma \ref{2.12}, we know that \( R/J(R) \) is reduced. Now, suppose there is \( a \in R/J(R) \) such that \( a - a^{2n} \neq 0 \) in \( R/J(R) \). Since \( R/J(R) \) is semi-potent, there is an idempotent \( e = e^2 \in R/J(R) \) such that \( e \in (a - a^{2n})R/J(R) \). So, \( e = (a - a^{2n})b \) for some \( b \in R/J(R) \).

As \( R/J(R) \) is abelian, we extract that
\[
e = ea(1 - a^{2n-1})b = e(1 - a^{2n-1})ab.
\]
Thus, Proposition \ref{2.11} reaches us that \( ea,\, e(1 - a^{2n-1}) \in U(eR/J(R)e) \).

In the other vein, we observe that
\[
(ea)^{2n-1} + e(1 - a^{2n-1}) = e. \qquad (*)
\]
Since \( R/J(R) \) is a \( (2n - 1) \)-UQ ring, Proposition \ref{1.1}(v) ensures that \( eR/J(R)e \) is also \( (2n - 1) \)-UQ. Consequently, equation \( (*) \) contradicts Lemma \ref{sum two unit}. Hence, \( R/J(R) \) satisfies the identity \( x^{2n} = x \).

(ii) $\Rightarrow$ (iii): Put \( u \in U(R) \). Then, by (ii), we arrive at \( u - u^{2n} \in J(R) \). Since \( J(R) \) is an ideal and \( u \) is a unit, it follows that \( 1 - u^{2n-1} \in J(R) \). Thus, \( R \) is a \( (2n - 1) \)-UJ ring, as desired.
\end{proof}

Some additional background is now helpful to be recollected: A ring $R$ is called {\it $\pi$-regular} if, for each $a\in R$, $a^n\in a^nRa^n$ for some integer $n$ depending on $a$. So, regular rings are always $\pi$-regular. Likewise, a ring $R$ is said to be {\it strongly $\pi$-regular} provided that, for any $a\in R$, there exists $n$ depending on $a$ such that $a^n\in a^{n+1}R$.

\medskip

We are now ready to formulate and prove the following consequence.

\begin{corollary}
The following four statements are equivalent for a $(2n-1)$-$UQ$ ring \(R\):
\begin{enumerate}
\item	
$R$ is a regular ring.
\item	
$R$ is a $\pi$-regular reduced ring.
\item	
$R$ is a strongly regular ring.
\item	
$R$ is a unit-regular ring.
\item	
$R$ satisfies the identity $x^{2n}=x$.
\end{enumerate}
\end{corollary}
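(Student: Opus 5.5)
Throughout, $R$ is a $(2n-1)$-UQ ring. I would prove (i) $\Rightarrow$ (v) $\Rightarrow$ (iii) $\Rightarrow$ (iv) $\Rightarrow$ (i), and attach (ii) via (iii) $\Rightarrow$ (ii) $\Rightarrow$ (v).

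The key implication is (i) $\Rightarrow$ (v). A regular ring has $J(R)=(0)$. It is also semi-potent: every right (or left) ideal not contained in $J(R)=0$ contains a non-zero element $a$, and $e=ax$ with $axa=a$ is a non-zero idempotent in $aR$. Hence Theorem \ref{semipotent} applies with $R/J(R)=R$. Since $R$ is $(2n-1)$-UQ, item (i) of that theorem holds, so (ii) gives the identity $x^{2n}=x$ on $R$.

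For (v) $\Rightarrow$ (iii): from $x^{2n}=x$ we get that $R$ is reduced, since $a^2=0$ forces $a=a^{2n}=0$. Also $a=a^2\cdot a^{2n-2}\in a^2R$ (for $n\ge 1$, reading $a^0=1$). So $R$ is strongly regular. Next, (iii) $\Rightarrow$ (iv) is standard: a strongly regular ring is abelian regular, and abelian regular rings are unit-regular. Explicitly, take $x$ with $axa=a$ and $ax=xa$; then $u=xax+(1-ax)$ is a unit (with inverse $a+(1-ax)$ after using $ax=xa$), and $aua=a$. Finally (iv) $\Rightarrow$ (i) is trivial.

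For (ii), first (iii) $\Rightarrow$ (ii) holds because strongly regular rings are regular (hence $\pi$-regular) and reduced, as noted above. For (ii) $\Rightarrow$ (v), the plan is to show $R$ is regular and then reuse (i) $\Rightarrow$ (v). A reduced ring is abelian, and a $\pi$-regular reduced ring is strongly $\pi$-regular; in a reduced ring, $a^m\in a^{m+1}R$ already forces $a\in a^2R$. Concretely, if $a^m=a^{m+1}y$, then $(a-a^2y)a^{m-1}\cdot\ldots$ vanishes, and one peels powers off using reducedness ($xa^k=0\Rightarrow xa=0$). So $R$ is strongly regular, hence regular.

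The main obstacle is this last reduction in (ii) $\Rightarrow$ (v): verifying cleanly that a $\pi$-regular reduced ring is regular. I would cite the classical fact, or argue as follows. From $a^m=a^mza^m$, put $e=a^mz$, a central idempotent because $R$ is abelian. Then $a(1-e)$ is nilpotent, since $(a(1-e))^m=a^m(1-e)=0$, so $a(1-e)=0$ and $a=ae=a^{m}(za)$ with $za$ commuting appropriately. This exhibits $a\in a^2R$.
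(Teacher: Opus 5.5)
\textbf{The step (i)$\Rightarrow$(v) fails, and the statement itself is false.} Your purely ring-theoretic steps are sound: (v)$\Rightarrow$(iii), (iii)$\Rightarrow$(iv) via the explicit unit $xax+(1-ax)$, (iii)$\Rightarrow$(ii), and your elementary proof that a $\pi$-regular reduced ring is strongly regular, which replaces the paper's external citation. However, the $(2n-1)$-UQ hypothesis enters your argument only once, in (i)$\Rightarrow$(v) via Theorem~\ref{semipotent}. The paper's proof relies on the same step. This step cannot work, because Theorem~\ref{semipotent} and the corollary are both false.

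Take $n=2$ and $R=M_2(\mathbb{F}_2)$, so $U(R)\cong S_3$.
\begin{itemize}
\item A unit of order $3$ has $u^3=1$.
\item A unit of order $2$ has $u^3=u$ and $(u-1)^2=u^2+1=0$, since $R$ has characteristic $2$.
\end{itemize}
Hence $u^3-1\in Nil(R)\subseteq QN(R)$ for every unit, so $R$ is $3$-UQ. The ring $R$ is regular and unit-regular. But $x=\begin{pmatrix}0&1\\0&0\end{pmatrix}$ has $x^2=0$ and $x^4\neq x$, so (ii), (iii) and (v) all fail.

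This is not an isolated case. If $R$ is any finite ring of characteristic $2$ and $|U(R)|=2^a m$ with $m$ odd, then $(u^m-1)^{2^a}=u^{m2^a}-1=0$ for every unit $u$. So $R$ is $m$-UQ whenever $m\ge 3$; if $m=1$, the same holds for every odd index $\ge 3$.

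\textbf{Where the paper goes wrong.} The root error is Proposition~\ref{2.10}, which the same ring refutes. Its proof needs $A\mp I\notin QN(M_2(R))$. But Proposition~\ref{1.1}(i) forces $2\in J(R)$ whenever the index is odd. Modulo $J$, both $A\mp I$ then become $\begin{pmatrix}1&1\\1&1\end{pmatrix}$ over a ring of characteristic $2$, which squares to zero. Consequently Lemma~\ref{2.12} and the implication (i)$\Rightarrow$(ii) of Theorem~\ref{semipotent} collapse. These are exactly what the paper uses for its own (i)$\Rightarrow$(ii) and (i)$\Rightarrow$(v). No argument can take you from ``regular'' to ``reduced'' here.

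\textbf{What you can salvage.} Your argument does legitimately show that (ii), (iii) and (v) are equivalent and imply (i) and (iv). For this you need (iii)$\Rightarrow$(v) without Theorem~\ref{semipotent}, which goes as follows.
\begin{enumerate}
\item In a strongly regular ring, $QN(R)=0$. Write $q=gw$ with $g$ a central idempotent and $w$ a unit. Then $w^{-1}$ commutes with $q$, so $1-qw^{-1}=1-g$ must be a unit, which forces $g=0$.
\item Hence the $(2n-1)$-UQ condition gives $u^{2n-1}=1$ for every unit $u$.
\item Writing $a=fv$ with $f$ a central idempotent and $v$ a unit, you get $a^{2n}=fv^{2n}=fv=a$.
\end{enumerate}
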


\begin{proof}
(i) $\Rightarrow$ (ii): Since \( R \) is a regular ring, it must be that \( J(R) = (0) \). Moreover, \( R \) is a semi-potent ring, because, for every \( 0 \neq a \in R \), there is \( b \in R \) such that \( a = aba \). Apparently, \( e := ab \in aR \) is an idempotent. So, Lemma \ref{2.12} assures that \( R \) is reduced.
	
(ii) $\Rightarrow$ (i): This follows from \cite[Theorem 3]{10}.
	
(i) $\Rightarrow$ (iii): Looking at (i) $\Rightarrow$ (ii), we may assume $R$ is reduced (and hence abelian). Therefore, $R$ is a strongly regular ring.
	
(iii) $\Rightarrow$ (iv): This is always true.
	
(iv), (v) $\Rightarrow$ (i): This is quite elementary.
	
(i) $\Rightarrow$ (v): Since every regular ring is semi-potent and $J(R)=(0)$, the result follows at once from Theorem~\ref{semipotent}.
\end{proof}

Our next chief result sounds surprisingly like this.

\begin{theorem}\label{potent}
Let $R$ be a potent ring. Then, the following six conditions are equivalent:
\begin{enumerate}
\item	
$R$ is a $(2n-1)$-$UQ$ ring.
\item	
$R/J(R)$ is a $(2n-1)$-$UQ$ ring.
\item	
$R/J(R)$ satisfies the identity $x^{2n}=x$.
\item	
$R$ is a $(2n-1)$-$UJ$ ring.
\item	
$R/J(R)$ is a $(2n-1)$-$UJ$ ring.
\item	
$R/J(R)$ is a $(2n-1)$-$UU$ ring.
\end{enumerate}
\end{theorem}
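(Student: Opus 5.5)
We close the loop of implications by reducing as much as possible to Theorem~\ref{semipotent}. A potent ring is semi-potent, and $R/J(R)$ is semi-potent too: a one-sided ideal of $R/J(R)$ that is nonzero lifts to a one-sided ideal of $R$ not contained in $J(R)$, and the image of its nonzero idempotent is nonzero modulo $J(R)$. So Theorem~\ref{semipotent}, applied to $R$, gives at once (ii) $\Leftrightarrow$ (iii) $\Leftrightarrow$ (iv) $\Leftrightarrow$ (vi).

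Next, (iv) $\Rightarrow$ (i) holds because $J(R)\subseteq QN(R)$. Also (iv) $\Rightarrow$ (v): every unit of $R/J(R)$ lifts to a unit of $R$, since $a$ is invertible whenever $\bar a$ is. Hence for $\bar u\in U(R/J(R))$ we get $\bar u^{2n-1}-\bar 1=\overline{u^{2n-1}-1}=\bar 0$. The implication (v) $\Rightarrow$ (ii) is immediate, since $J(R/J(R))=0\subseteq QN$ (alternatively, Corollary~\ref{factor UQ} shows that (v) forces $R$ to be $(2n-1)$-$UJ$).

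It remains to prove (i) $\Rightarrow$ (iii), where potency is genuinely used; this is the main step. Assume $R$ is $(2n-1)$-$UQ$. We first show that $\overline R=R/J(R)$ is reduced. Otherwise, $\overline R$ has $J(\overline R)=0$, is semi-potent, and contains a nonzero square-zero element. The argument of Lemma~\ref{2.12} (via \cite[Theorem 2.1]{9}) then produces an idempotent $\bar e$ with $\bar e\overline R\bar e\cong M_2(T)$, which contradicts Lemma~\ref{exe}(ii). So $\overline R$ is reduced, hence abelian.

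Now suppose $\bar a-\bar a^{2n}\neq 0$ for some $\bar a\in\overline R$. We copy the proof of Theorem~\ref{semipotent}(i)$\Rightarrow$(ii) inside $\overline R$. Semi-potency gives an idempotent $\bar e=(\bar a-\bar a^{2n})\bar b$. Since $\overline R$ is abelian and reduced, hence Dedekind-finite, both $\bar e\bar a$ and $\bar e(1-\bar a^{2n-1})$ are units of $\bar e\overline R\bar e$. They satisfy
\[
(\bar e\bar a)^{2n-1}+\bar e(1-\bar a^{2n-1})=\bar e,
\]
which contradicts Lemma~\ref{exe}(i). This yields (iii).

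The delicate point is exactly this step. We cannot use Proposition~\ref{1.1}(v) on $\overline R$ directly, because we do not yet know that $\overline R$ is $(2n-1)$-$UQ$. Instead, the contradiction must be transported to $R$ through Lemma~\ref{exe}. That lemma in turn relies on lifting $\bar e$ to an idempotent $e\in R$ (potency), on $\bar e\overline R\bar e\cong eRe/J(eRe)$, and on Lemma~\ref{sum two unit}(ii) for the $(2n-1)$-$UQ$ corner $eRe$.
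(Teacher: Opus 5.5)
Your argument is essentially the paper's proof. You use Theorem~\ref{semipotent} for (ii)$\Leftrightarrow$(iii)$\Leftrightarrow$(iv)$\Leftrightarrow$(vi). For (i)$\Rightarrow$(iii) you get reducedness of $\overline R$ from \cite[Theorem 2.1]{9} and Lemma~\ref{exe}(ii), then apply the semi-potent trick with Lemma~\ref{exe}(i). Your direct check of (iv)$\Leftrightarrow$(v) in place of \cite[Proposition 2.9]{3} is fine.

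The proof therefore inherits the paper's genuine gap, which sits exactly in the reducedness step. Lemma~\ref{exe}(ii) rests on Lemma~\ref{sum two unit}(iv), whose witness $I_2=\begin{pmatrix}1&1\\1&0\end{pmatrix}^2+\begin{pmatrix}-1&-1\\-1&0\end{pmatrix}$ exhibits the \emph{square} of a unit plus a unit. That contradicts Lemma~\ref{sum two unit}(iii) only for exponent $2$, not for exponent $2n-1$; the matrix size and the exponent are conflated there. Theorem~\ref{semipotent} needs the same reducedness, via Lemma~\ref{2.12} and Proposition~\ref{2.10}. Proposition~\ref{2.10} claims that $A-I$ is a unit, which is false once $2\in J$, and Proposition~\ref{1.1}(i) forces $2\in J$; over $\mathbb{F}_2$, $A-I$ is even nilpotent.

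This cannot be patched in general, because the statement is false whenever $3\mid 2n-1$. Take $n=2$ and $R=M_2(\mathbb{F}_2)$, which is semisimple and hence potent with $J(R)=0$. Its six units are $I$, three involutions $t$ and two elements $s$ of order $3$. For the involutions, $t^3-I=t-I$ and $(t-I)^2=2I-2t=0$; for the others, $s^3-I=0$. Hence $u^3-1\in Nil(R)\subseteq QN(R)$ for all $u\in U(R)$, so (i), (ii) and (vi) hold. However, $x=\begin{pmatrix}0&1\\0&0\end{pmatrix}$ violates $x^4=x$, and the involutions violate (iv) and (v). The same ring refutes Proposition~\ref{2.10} (with $k=2$), Lemma~\ref{2.12}, Lemma~\ref{exe}(ii) and Theorem~\ref{semipotent}.

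When $3\nmid 2n-1$, your proof goes through once you replace the witness. Since $2=0$ in $\overline R$, in a corner $\bar e\overline R\bar e\cong M_2(T)$ the unit $A=\begin{pmatrix}0&1\\1&1\end{pmatrix}$ satisfies $A^2=A+I$ and $A^3=I$. Hence $A^{2n-1}\in\{A,A^2\}$ and $I-A^{2n-1}\in\{A^2,A\}$ is a unit, which contradicts Lemma~\ref{exe}(i). The same matrix repairs Proposition~\ref{2.10}, using $A^2=A+I$ over any ring and $A^3\equiv I$ modulo $2\in J$. With it, Lemma~\ref{2.12} and Theorem~\ref{semipotent} are repaired as well.
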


\begin{proof}
One observes that Theorem \ref{semipotent} establishes the equivalences (ii) $\Leftrightarrow$ (iii) $\Leftrightarrow$ (iv) $\Leftrightarrow$ (vi), whereas \cite[Proposition 2.9]{3} proves (iv) $\Leftrightarrow$ (v). The implication (iv) $\Rightarrow$ (i) is immediate, leaving only (i) $\Rightarrow$ (iii) to be verified below.

(i) $\Rightarrow$ (iii): Since $R$ is a semi-potent ring, we find that $\overline{R} = R/J(R)$ is also semi-potent. We will show now that $\overline{R}$ is a reduced ring.

To that purpose, suppose $x^2 = 0$ but $0 \neq x \in \overline{R}$. Then, \cite[Theorem 2.1]{9} teaches us that there is $\bar{e} \in \overline{R}$ such that $\bar{e}\overline{R}\bar{e} \cong M_2(S)$, where $S$ is a non-zero ring. This, however. contradicts Lemma \ref{exe}(ii). Hence, $\overline{R}$ is reduced.

Now, suppose there is $a \in \overline{R}$ with $a - a^{2n} \neq 0$ in $\overline{R}$. Since $\overline{R}$ is semi-potent, there is $0\neq e = e^2 \in \overline{R}$ such that $e \in (a - a^{2n})\overline{R}$. Thus, $e = (a - a^{2n})b$ for some $b \in \overline{R}$. Since $e$ is central, we have $$e = ea \cdot e(1-a^{2n-1}) \cdot ey,$$ and so both $ea, e(1-a^{2n-1}) \in U(e\overline{R}e)$. So, $(ea)^{2n-1} + e(1-a^{2n-1}) = e$, contradicting Lemma \ref{exe}(i). Finally, $\overline{R}$ has the identity $x^{2n}=x$, as pursued.
\end{proof}

Three more consequences state as follows.

\begin{corollary}\label{2.16}
Let \( R \) be a $(2n-1)$-$UQ$ ring. Then, the following three points are equivalent:
\begin{enumerate}
\item
\( R \) is an exchange ring.
\item
\( R \) is a clean ring.
\item
\( R \) is a semi-regular ring.
\end{enumerate}
\end{corollary}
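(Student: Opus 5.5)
Since semi-regular implies exchange and clean implies exchange hold for all rings, as recalled in the introduction, only the implication from exchange to both clean and semi-regular needs proof. The plan is to use the $(2n-1)$-$UQ$ hypothesis through Theorem~\ref{potent}. This will show that $R/J(R)$ is a Boolean-like ring satisfying $x^{2n}=x$, and in particular is abelian and regular. Cleanness and semi-regularity then follow by lifting.

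First, assume $R$ is exchange. It is standard (Nicholson \cite{5}) that idempotents of an exchange ring lift modulo every one-sided ideal, and in particular modulo $J(R)$. Also, every one-sided ideal not contained in $J(R)$ contains a non-zero idempotent. So $R$ is potent, and Theorem~\ref{potent}, implication (i) $\Rightarrow$ (iii), shows that $\overline{R}=R/J(R)$ satisfies $x^{2n}=x$. Hence $\overline{R}$ is regular, since $a=a\cdot a^{2n-2}\cdot a$. Thus $R/J(R)$ is regular and idempotents lift modulo $J(R)$, which says exactly that $R$ is semi-regular.

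For cleanness, note that a ring with the identity $x^{2n}=x$ is reduced: if $x^2=0$ then $x=x^{2n}=0$. So $\overline{R}$ is reduced, hence abelian, hence strongly regular. Strongly regular rings are clean; more directly, every element of an abelian regular ring is unit-regular and therefore clean. Alternatively, $\overline{R}$ is an abelian exchange ring and so clean by \cite[Proposition 1.8]{5}.

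It remains to lift cleanness from $\overline{R}$ to $R$. We use the known result that an exchange ring whose factor $R/J(R)$ is clean is itself clean, since idempotents lift and units lift modulo $J(R)$ (Camillo--Yu / Nicholson). Concretely, write $\bar a=\bar e+\bar u$, lift $\bar e$ to an idempotent $e$, and set $u=a-e$. Then $\bar u$ is a unit, so $u$ is a unit because $J(R)$ is contained in the Jacobson radical. This gives $a=e+u$.

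The main obstacle is confirming that an exchange ring is potent, which is needed to invoke Theorem~\ref{potent}. I would cite Nicholson's result that exchange rings are exactly the rings in which $R/J(R)$ is exchange and idempotents lift modulo $J(R)$. I would also use that in an exchange ring every right ideal not contained in $J(R)$ contains a non-zero idempotent, which follows from the exchange property applied to an element $a\notin J(R)$.
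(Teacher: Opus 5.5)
Your proof of (i)$\Rightarrow$(iii) is the paper's: idempotents lift modulo $J(R)$ in an exchange ring, and Theorem~\ref{potent} gives $x^{2n}=x$ in $R/J(R)$, so $R/J(R)$ is regular. You also make explicit that exchange rings are potent, which the paper uses tacitly.

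For (i)$\Rightarrow$(ii) your route is different and better. The paper applies Lemma~\ref{2.12} to $R$ itself to conclude that $R$ is reduced. That lemma needs $J(R)=0$, and the conclusion is false in general: $\mathbb{Z}/4\mathbb{Z}$ is local, hence exchange, and $(2n-1)$-$UQ$ (since $(-1)^{2n-1}-1=2$ is nilpotent), yet it is not reduced. You only use reducedness of $R/J(R)$, which follows from $x^{2n}=x$. You get cleanness there from abelian regularity, then lift it using idempotent lifting and the fact that $\bar u\in U(R/J(R))$ forces $u\in U(R)$. That is exactly the repair the paper's argument needs.

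However, both proofs rest on Theorem~\ref{potent}(i)$\Rightarrow$(iii), and that implication is false whenever $3\mid 2n-1$. The faulty ingredient is the exclusion of $M_2$-corners:
\begin{itemize}
\item Lemma~\ref{exe}(ii) relies on the exponent-$2$ identity $I=u^2+v$, not an exponent-$(2n-1)$ one.
\item Proposition~\ref{2.10} asserts that $B=A-I$ is a unit, although $B^2=-2A$ and $2$ lies in the Jacobson radical of any odd-exponent $UQ$ ring by Proposition~\ref{1.1}(i).
\end{itemize}
Indeed $M_2(\mathbb{F}_2)$ is $3$-$UQ$: its units have order $1$, $2$ or $3$, and an involution $u$ satisfies $(u-1)^2=0$. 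It is potent, yet $e_{12}^4\neq e_{12}$.

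The corollary itself also fails. Take
\[
S=\{(a_i)\in\textstyle\prod_{i\geq 1}M_2(\mathbb{F}_2) : a_i\in T_2(\mathbb{F}_2)\ \text{for all but finitely many } i\}.
\]
\begin{itemize}
\item $S$ is exchange and clean, coordinatewise, since $M_2(\mathbb{F}_2)$ and $T_2(\mathbb{F}_2)$ are.
\item $S$ is $3$-$UQ$, because $(u^3-1)^2=0$ for all $u\in U(S)$.
\item $J(S)=0$: test an element of $J(S)$ against elements supported in a single coordinate.
\item The constant sequence $(e_{12})$ has no inner inverse in $S$, since one would need $(2,1)$-entries equal to $1$ in every coordinate.
\end{itemize}
So $S$ is not semi-regular, and your proof, like the paper's, is valid only when $3\nmid 2n-1$. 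In that case the $M_2$-exclusion can be repaired with $u=\begin{pmatrix}0&1\\1&1\end{pmatrix}$. Since $u-1=u^{-1}$, $u^2-1=u$ and $u^3-1=2u$, the element $u^{2n-1}-1$ is a unit modulo $2$, and $2$ lies in the radical. With that fix, Theorem~\ref{potent} and your argument go through.
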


\begin{proof}
\((ii) \Rightarrow (i)\): It is clear.

\((i) \Rightarrow (ii)\): If \( R \) is exchange $(2n-1)$-$UQ$, then \( R \) is reduced with Lemma \ref{2.12} at hand, and hence it is abelian. Therefore, \( R \) is abelian exchange, so it is clean (see \cite{5}).

\((i) \Rightarrow (iii)\): Since $R$ is exchange, all idempotents lift modulo \(J(R)\) (see \cite{5}). Also, with the aid of Theorem \ref{potent}, \(R/J(R)\) possesses the identity $x^{2n}=x$. So, $R$ is semi-regular.

\((iii) \Rightarrow (i)\): It is plain, because semi-regular rings are always exchange.
\end{proof}

To simplify exposition, we henceforth will say that a potent ring whose elements modulo Jacobson radical satisfy the equation $x^{2n}=x$ is just {\it semi-$(2n)$-potent}.

\begin{corollary}
Let $R$ be a ring. Then, $R$ is a $(2n-1)$-$UQ$ clean ring if, and only if, $R$ is semi-$(2n)$-potent.
\end{corollary}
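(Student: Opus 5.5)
We prove the two implications separately, relying on Theorem~\ref{potent} and Corollary~\ref{2.16}.

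For the forward direction, let $R$ be a $(2n-1)$-$UQ$ clean ring. Clean rings are exchange, so Corollary~\ref{2.16} shows that $R$ is semi-regular. Hence $R/J(R)$ is regular and idempotents lift modulo $J(R)$. Regular rings are semi-potent, since for $a\neq 0$ with $a=axa$ the element $ax\in aR$ is a non-zero idempotent. The quotient $R/J(R)$ is therefore semi-potent, and so is $R$: a one-sided ideal not contained in $J(R)$ has non-zero image, which contains a non-zero idempotent, and this idempotent lifts to one in the original ideal because $R$ is exchange. Thus $R$ is potent. Theorem~\ref{potent}, (i)$\Rightarrow$(iii), now gives the identity $x^{2n}=x$ on $R/J(R)$, so $R$ is semi-$(2n)$-potent.

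For the converse, let $R$ be potent with $R/J(R)$ satisfying $x^{2n}=x$. Theorem~\ref{potent}, (iii)$\Rightarrow$(i), gives at once that $R$ is $(2n-1)$-$UQ$. It remains to show that $R$ is clean. The identity $x^{2n}=x$ forces $R/J(R)$ to be reduced: if $x^2=0$ then $x=x^{2n}=0$. A reduced ring is abelian, and each $x$ is regular, since $x\cdot x^{2n-2}\cdot x=x^{2n}=x$ (for $n=1$ the ring is Boolean). So $R/J(R)$ is an abelian regular ring, hence strongly regular, hence clean. Since idempotents lift modulo $J(R)$ by potency, $R$ is semi-regular, and in particular exchange. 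Corollary~\ref{2.16}, applied to the $(2n-1)$-$UQ$ ring $R$, then yields that $R$ is clean.

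The main obstacle is the lifting of semi-potency to $R$ in the forward direction, that is, producing an idempotent inside a given one-sided ideal of $R$ rather than merely modulo $J(R)$. The plan is to use the standard fact that in an exchange ring every right ideal not contained in $J(R)$ contains a non-zero idempotent. If that is awkward to justify, we would instead bypass Theorem~\ref{potent} and repeat its (i)$\Rightarrow$(iii) argument directly for the semi-regular ring $R$, using the liftable idempotents.
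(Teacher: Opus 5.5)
Your plan is the paper's own. In the forward direction you go clean $\Rightarrow$ exchange $\Rightarrow$ potent and then apply Theorem~\ref{potent}, (i)$\Rightarrow$(iii). In the converse you use the identity on $R/J(R)$ plus lifting of idempotents. Your potency argument is correct and more explicit than the paper's ``adapting Theorem~\ref{potent}''. (Exchange alone yields it, so the appeal to Corollary~\ref{2.16} there is unnecessary.)

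\textbf{The gap.} The step you treat as routine fails: Theorem~\ref{potent}, (i)$\Rightarrow$(iii), is false whenever $3\mid 2n-1$, and in those cases the forward implication itself is false. Take $n=2$ and $R=M_2(\mathbb{F}_2)$.
\begin{itemize}
\item $R$ is semisimple artinian, so $J(R)=0$, $R$ is potent, and $R$ is clean.
\item Its unit group $GL_2(\mathbb{F}_2)\cong S_3$ has exponent $6$. For $u$ of order $3$ we get $u^3-1=0$. For an involution $u$ we get $u^3-1=u+1$, and $(u+1)^2=u^2+1=0$.
\item Hence $R$ is $3$-$UQ$ (even $3$-$UU$). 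In fact it is $(2n-1)$-$UQ$ for every $n$ with $3\mid 2n-1$, because then $u^{2n-1}=u^3$.
\item But $x=\begin{pmatrix}0&1\\0&0\end{pmatrix}$ satisfies $x^{2n}=0\neq x$, so $R$ is not semi-$(2n)$-potent.
\end{itemize}

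\textbf{Where it breaks.} The failure is the reducedness of $R/J(R)$. Theorem~\ref{potent} excludes corners $\bar e\overline{R}\bar e\cong M_2(S)$ via Lemma~\ref{exe}(ii), which rests on Lemma~\ref{sum two unit}(iv). The identity displayed there only gives $1=u^2+v$ with $u,v$ units. What is needed is $1=u^{2n-1}+v$, and in $M_2(\mathbb{F}_2)$ with $3\mid 2n-1$ the element $1-u^{2n-1}$ is always $0$ or nilpotent. For the same reason, $A-I$ in Proposition~\ref{2.10} is nilpotent modulo $J$ once $2\in J(R)$, and Lemma~\ref{2.12} also fails for this $R$. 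Your fallback of rerunning the argument on the semi-regular ring cannot help, since the conclusion is false.

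\textbf{Repair when $3\nmid 2n-1$.} By Proposition~\ref{1.1}(i), $2\in J(R)$, so $2=0$ in any corner $M_2(S)$ of $R/J(R)$. There $u=\begin{pmatrix}0&1\\1&1\end{pmatrix}$ satisfies $u^2+u+1=0$, hence $u^3=1$ and $u^{2n-1}\in\{u,u^2\}$. Then $v=1-u^{2n-1}\in\{u^2,u\}$ is a unit, contradicting Lemma~\ref{exe}(i). After this, the rest of the proof of Theorem~\ref{potent}, and your forward direction, go through.

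\textbf{Converse.} Your converse is correct. However, routing cleanness through Corollary~\ref{2.16} leans on a flawed proof: its (i)$\Rightarrow$(ii) invokes Lemma~\ref{2.12} without $J(R)=0$, and $\mathbb{Z}/4\mathbb{Z}$ is exchange, $(2n-1)$-$UQ$ and not reduced. You already have what the paper uses directly. Write $\bar a=\bar f+\bar w$ in the clean ring $R/J(R)$ and lift $\bar f$ to an idempotent $f$. Then $a-f$ is a unit, because it is a unit modulo $J(R)$.
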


\begin{proof}
Adapting Theorem \ref{potent}, every clean $(2n-1)$-UQ ring is semi-$(2n)$-potent.

Conversely, assume $R$ is semi-$(2n)$-potent. Set $u \in U(R)$, and let $u = e + j$ be a semi-$(2n)$-potent representation of $u$ with $e^{2n-1}\in Id(R)$ and $j\in J(R)$. Then, one sees that
\[
	e = u - j \in U(R) + J(R) \subseteq U(R) \Rightarrow e^{2n-1} \in U(R) \cap Id(R) = \{1\}
\]
and, consequently, $u^{2n-1} = e^{2n-1} + j$ for some $j\in J(R)$. Hence, one obtains that $$u^{2n-1}\in 1+J(R)\subseteq 1+QN(R).$$ We now show that $R$ is a clean ring. In fact, since $R/J(R)$ has the identity $x^{2n}=x$, it follows that $R/J(R)$ is strongly regular and so \cite[Theorem 1.1.2]{chenbook} employs to get that it is clean. Moreover, by the definition of semi-$(2n)$-potent rings, idempotents lift modulo $J(R)$. Therefore, $R$ is a clean ring, as asked.
\end{proof}

\begin{corollary}
Let $R$ be an artinian (in particular, a finite) ring. Then, the following three conditions are equivalent:
\begin{enumerate}
\item	
$R$ is a $(2n-1)$-$UQ$ ring.
\item	
$R$ is a $(2n-1)$-$UJ$ ring.
\item	
$R$ is a $(2n-1)$-$UU$ ring.
\end{enumerate}
\end{corollary}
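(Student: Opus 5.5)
Since $R$ is artinian, I plan to use three standard facts: $J(R)$ is nilpotent (so $J(R)\subseteq Nil(R)$), $R/J(R)$ is semisimple, and $R$ is semi-regular, hence potent. The implications (ii) $\Rightarrow$ (i) and (iii) $\Rightarrow$ (i) are immediate, because $J(R)\subseteq QN(R)$ and $Nil(R)\subseteq QN(R)$. The remaining work is to show (i) $\Rightarrow$ (ii) and (ii) $\Rightarrow$ (iii).

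For (i) $\Rightarrow$ (ii), an artinian ring is semi-regular, so $R/J(R)$ is regular and idempotents lift modulo $J(R)$. Every non-zero one-sided ideal of a regular ring contains a non-zero idempotent, and lifting then shows that every one-sided ideal of $R$ not contained in $J(R)$ contains a non-zero idempotent. Thus $R$ is potent, and Theorem~\ref{potent} applied with the equivalence (i) $\Leftrightarrow$ (iv) gives that $R$ is $(2n-1)$-$UJ$. The potency check is the only point that needs any care; alternatively I would invoke Corollary~\ref{2.16}, since artinian rings are exchange and therefore semi-regular, and then appeal to Theorem~\ref{potent}.

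For (ii) $\Rightarrow$ (iii), take $u\in U(R)$. Then $u^{2n-1}-1\in J(R)$, and since $J(R)$ is a nilpotent ideal in an artinian ring, $J(R)\subseteq Nil(R)$. Hence $u^{2n-1}-1\in Nil(R)$, so $R$ is $(2n-1)$-$UU$. This closes the cycle (i) $\Rightarrow$ (ii) $\Rightarrow$ (iii) $\Rightarrow$ (i). For the finite case it suffices to note that finite rings are artinian.

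The main obstacle is justifying that $R$ is potent so that Theorem~\ref{potent} applies. The cleanest route is semi-regularity of artinian rings, which is standard (semi-primary rings are semi-perfect, hence semi-regular).
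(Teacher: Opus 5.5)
Your implications (ii) $\Rightarrow$ (i), (iii) $\Rightarrow$ (i) and (ii) $\Rightarrow$ (iii) are correct. The last one, via $J(R)\subseteq Nil(R)$, is more direct than the paper's detour through $R/J(R)$ being $(2n-1)$-UU. Your (i) $\Rightarrow$ (ii) is exactly the paper's argument: $R$ is potent, so apply Theorem~\ref{potent}. That step fails, and it cannot be repaired, because the implication is false.

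Take $R=M_2(\mathbb{F}_2)$ and $2n-1=3$. The unit group $U(R)=GL_2(\mathbb{F}_2)\cong S_3$ consists of $I$, two elements with $u^3=I$, and three involutions. For an involution $u$ we have $u^3-I=u-I$ and $(u-I)^2=u^2-I=0$ in characteristic $2$. Hence $u^3-I\in Nil(R)\subseteq QN(R)$ for every unit, so $R$ is $3$-UU and $3$-UQ. Yet $J(R)=0$ and $u=\begin{pmatrix}0&1\\1&0\end{pmatrix}$ has $u^3\neq I$, so $R$ is not $3$-UJ. It also fails under the introduction's convention $u-u^3\in J(R)$, since elements of order $3$ violate $u^2=I$. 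Moreover $x=e_{12}$ gives $x^4=0\neq x$. So Theorem~\ref{potent} is false for this finite, hence potent, ring.

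The defect lies in the reducedness step on which Theorem~\ref{potent} and Lemma~\ref{2.12} rest.

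\textbf{Proposition~\ref{2.10}.} The matrix $B=A-I$ is asserted to be a unit, but $\det B=2$. Once $M_2(R)$ is assumed to be $(2k-1)$-UQ, Proposition~\ref{1.1}(i) puts $2\in J(M_2(R))$. Then $B^2=2A$ lies in the radical, so $B$ is never invertible in the relevant case.

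\textbf{Lemma~\ref{sum two unit}(iv).} It only exhibits $I_2=u^2+v$, which contradicts exponent $2$, not exponent $2n-1$. Indeed, in $M_2(\mathbb{F}_2)$ the equation $u^3+v=I$ has no solution with $u,v$ units.

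\textbf{What survives.} For artinian rings, (i) $\Leftrightarrow$ (iii) is true. Such rings are strongly $\pi$-regular, so each $a$ has a Drazin inverse $b$ with $ab=ba$ and $a(1-ab)$ nilpotent. If $a\in QN(R)$, then $1-ab$ is an invertible idempotent, so $ab=0$ and $a$ is nilpotent. Thus $QN(R)=Nil(R)$. Together with your (ii) $\Rightarrow$ (iii), this gives (ii) $\Rightarrow$ (i) $\Leftrightarrow$ (iii), with (ii) strictly stronger in general.
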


\begin{proof}
%Our investigation is confined to artinian rings, a natural choice given that all finite rings fall within this broader class.
As established in \cite[Corollary 6]{camilo}, every artinian ring is clean which, in conjunction with Theorem \ref{potent}, insures the equivalence between the $(2n-1)$-$UQ$ and $(2n-1)$-$UJ$ conditions in this context.

Moreover, the artinian assumption is a guarantor that the Jacobson radical $J(R)$ is contained in the nil-radical $Nil(R)$. So, \cite[Lemma 2.1]{10} can be applied to get that the ring $R$ satisfies the $(2n-1)$-$UU$ property if, and only if, the factor-ring $R/J(R)$ does. Furthermore, Theorem \ref{potent} tells us that the $(2n-1)$-$UJ$ and $(2n-1)$-$UU$ properties are equivalent for artinian rings, thereby completing the argument.
\end{proof}

We are now concerned with group rings. To that goal, let $R$ be any ring and $G$ any group. We traditionally denote by $RG$ the {\it group ring} of $G$ over $R$. Likewise, the {\it augmentation ideal} $\Delta(RG)$ is defined as the kernel of the standard augmentation homomorphism $\varepsilon: RG\to R$, where $$\varepsilon\left(\sum_{g\in G}a_g g\right) = \sum_{g\in G}a_g.$$

Remember also that a group $G$ is called a {\it $p$-group} if all its elements have orders that are powers of a prime $p$, and {\it locally finite} if every finitely generated subgroup is finite.

\medskip

What we can perceive in this matter are the following two claims.

\begin{proposition}
\begin{enumerate}
\item
If $RG$ is an $n$-$UQ$ ring, then $R$ is also an $n$-$UQ$ ring.
\item	
If $R$ is an $n$-$UQ$ ring and $G$ is a locally finite $p$-group, where $p$ is a prime number such that $p \in J(R)$, then $RG$ is an $n$-$UQ$ ring.
\end{enumerate}
\end{proposition}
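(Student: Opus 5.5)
For part (i), I will realize $R$ as a rationally closed subring of $RG$ and then apply Proposition~\ref{1.1}(iv). The natural embedding $R\hookrightarrow RG$, $r\mapsto r\cdot 1_G$, is split by the augmentation $\varepsilon$, which is a ring epimorphism with $\varepsilon|_R=\mathrm{id}_R$. If $r\in R$ is invertible in $RG$ with inverse $w$, then applying $\varepsilon$ to $rw=wr=1$ gives $r\varepsilon(w)=\varepsilon(w)r=1$. Hence $U(RG)\cap R=U(R)$, so $R$ is rationally closed in $RG$, and Proposition~\ref{1.1}(iv) gives that $R$ is $n$-$UQ$. Alternatively, the same conclusion follows because $R$ is a homomorphic image of $RG$ with the units of $R$ lifting, and $\varepsilon$ carries $u^n-1$ into $QN(R)$. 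This is not automatic in general, so I will rely on the rationally closed argument.

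For part (ii), the plan is to use Corollary~\ref{factor UQ} with $I=\Delta(RG)$. The key step is the classical fact that if $G$ is a locally finite $p$-group and $p\in J(R)$, then $\Delta(RG)\subseteq J(RG)$ (Connell-type result; see e.g.\ the standard references on group rings cited in the $UQ$/$UU$ literature). I would reprove it as follows. Take $x\in\Delta(RG)$; it involves only finitely many group elements, which lie in a finite $p$-subgroup $H$. It then suffices to show $\Delta(RH)\subseteq J(RH)$ and that this persists in $RG$. For a finite $p$-group $H$, each $1-h$ satisfies $(1-h)^{p^k}\in pRH$ modulo binomial terms. More precisely, $(1-h)^{p^k}=\sum\binom{p^k}{i}(-h)^i$, and all middle coefficients are divisible by $p$, while $1-h^{p^k}=0$. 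Hence $(1-h)^{p^k}\in pRH\subseteq J(R)H$. Since $J(R)H\subseteq J(RH)$ for finite $H$ ($RH$ is a finite free $R$-module), $\Delta(RH)$ is nil modulo $J(RH)$, and being an ideal, it lies in $J(RH)$. Passing from $J(RH)$ to $J(RG)$ uses the standard fact that for locally finite $G$, an element $1-yx$ with $x\in\Delta(RG)$ lies in some $RH'$ for a finite subgroup $H'$ containing the supports. There $1-yx$ is invertible, so $\Delta(RG)$ is quasi-regular, i.e.\ $\Delta(RG)\subseteq J(RG)$.

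Once $\Delta(RG)\subseteq J(RG)$ is established, we have $RG/\Delta(RG)\cong R$ via $\varepsilon$, and $R$ is $n$-$UQ$ by hypothesis. Corollary~\ref{factor UQ} then yields immediately that $RG$ is $n$-$UQ$. The main obstacle is the inclusion $\Delta(RG)\subseteq J(RG)$, especially the nil-modulo-$J$ step for finite $p$-groups when $R$ is noncommutative. I would first try to cite it directly from the literature (it is a known lemma used for $UU$ and $UJ$ group rings) rather than reprove it.
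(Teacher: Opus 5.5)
Your plan is the paper's proof. For (i), the paper asserts that $R$ is rationally closed in $RG$ and applies Proposition~\ref{1.1}(iv); your augmentation argument supplies the omitted check. For (ii), the paper cites the known inclusion $\Delta(RG)\subseteq J(RG)$ and combines $RG/\Delta(RG)\cong R$ with Corollary~\ref{factor UQ}, exactly as you intend.

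Citing that inclusion is the right call, because your sketched reproof has a gap. From $(1-h)^{p^k}\in pRH$ for each $h$ you cannot conclude that $\Delta(RH)$ is nil modulo $J(RH)$ when $H$ is non-abelian. A typical element $\sum_h r_h(1-h)$ is a sum of elements that need not commute, and sums of nilpotents need not be nilpotent. The obstruction is non-commutativity of $H$, not of $R$, since $R$ commutes with $H$; for abelian $H$ each $1-h$ is central and your argument works.

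The standard repair is as follows. Put $\bar R=R/J(R)$, an $\mathbb{F}_p$-algebra because $p\in J(R)$. Then $\bar RH\cong\bar R\otimes_{\mathbb{F}_p}\mathbb{F}_pH$ and $\Delta(\bar RH)=\bar R\,\Delta(\mathbb{F}_pH)$. Since $\bar R$ commutes with $\mathbb{F}_pH$, this gives $\Delta(\bar RH)^m=\bar R\,\Delta(\mathbb{F}_pH)^m$.

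Next, $\Delta(\mathbb{F}_pH)$ is nilpotent for a finite $p$-group $H$, by induction on $|H|$. Pick a central $z$ of order $p$; then $1-z$ is central with $(1-z)^p=0$. The map $\mathbb{F}_pH\to\mathbb{F}_p[H/\langle z\rangle]$ has kernel $(1-z)\mathbb{F}_pH$ and sends $\Delta(\mathbb{F}_pH)$ onto $\Delta(\mathbb{F}_p[H/\langle z\rangle])$, which is nilpotent by induction.

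Hence $\Delta(RH)^m\subseteq J(R)H\subseteq J(RH)$. A nilpotent ideal modulo $J(RH)$ lies in $J(RH)$, so $\Delta(RH)\subseteq J(RH)$. The rest of your argument, including the passage from finite subgroups to the locally finite group $G$, is fine.
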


\begin{proof}
\begin{enumerate}
\item	
Since $R$ is a rationally closed subring of $RG$, Proposition \ref{1.1}(iv) informs us that $R$ inherits the $n$-$UQ$ property of $RG$.
\item	
Regarding \cite[Lemma 2]{26}, we have $\Delta(RG) \subseteq J(RG)$. So, the isomorphism $RG/\Delta(RG) \cong R$ along with Corollary \ref{factor UQ} give us the conclusion.
\end{enumerate}
\end{proof}

We terminate our work with the following observation.

\begin{lemma}
Let $RG$ be an n-$UQ$ ring with $3 \in J(RG)$, and let $G$ be a $2$-group. Then, $G$ is a group of exponent $2$.
\end{lemma}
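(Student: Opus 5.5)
The plan is to argue by contradiction: produce a quasi-nilpotent element of the form $h-1$ with $h\in G$ of order $4$, and then exhibit a commuting $x$ for which $1-(h-1)x$ is not a unit. I will assume $n$ is odd, in line with the $(2n-1)$-$UQ$ setting of the preceding results. Some such restriction seems unavoidable. If $4\mid n$, an element $g$ of order $4$ satisfies $g^n-1=0\in QN(RG)$ and gives no information. If $n\equiv 2 \pmod 4$, one only gets $g^2-1\in QN(RG)$, and $g^2$ has order $2$, which again is harmless.

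\emph{Step 1 (reduction to order $4$).} Suppose $G$ is not of exponent $2$. Then some $g\in G$ has order $2^k$ with $k\geq 2$, and $g_0:=g^{2^{k-2}}$ has order exactly $4$. Since $n$ is odd, $h:=g_0^{\,n}$ also has order $4$. Because $g_0\in U(RG)$ and $RG$ is $n$-$UQ$, we get $q:=h-1\in QN(RG)$. Put $N:=1+h+h^2+h^3$. Then $(h-1)N=h^4-1=0$, and $N\neq 0$ since $1,h,h^2,h^3$ are distinct group elements.

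\emph{Step 2 (choice of the commuting element).} Observe that
\[
N-4=(h-1)+(h^2-1)+(h^3-1)=(h-1)w, \qquad w:=3+2h+h^2 .
\]
Take $x:=-w$. It is a polynomial in $h$, hence commutes with $q$. Then
\[
1-qx=1+(h-1)w=1+(N-4)=N-3 .
\]
Quasi-nilpotency of $q$ now forces $N-3\in U(RG)$.

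\emph{Step 3 (contradiction).} Since $3\in J(RG)$, we have $N=(N-3)+3\in U(RG)+J(RG)\subseteq U(RG)$. But $N(h-1)=0$ and $h-1\neq 0$, because $h\neq 1$ and distinct group elements are linearly independent in $RG$. A unit cannot annihilate a nonzero element, so this is impossible. Hence every element of $G$ has order at most $2$, i.e.\ $G$ has exponent $2$.

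The main obstacle is Step 2. The naive choices of $x$ (for instance $x=-(1+h)$) only produce units such as $h^2$, and we cannot make $1-qx$ equal to $N$ itself because that would require $3=0$. The key point is to use $3\in J(RG)$ to pass from $N-3$ to $N$, together with the divisibility $N-4\in(h-1)\,\mathbb{Z}[h]$. The remaining checks are routine: that $x$ commutes with $q$, and that $h-1\neq 0$.
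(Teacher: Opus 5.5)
Your Steps 1--3 are valid for odd $n$ (and $R\neq 0$), and choosing $x=-(3+2h+h^2)$ so that $1-qx=N-3$ is a genuine use of quasi-nilpotency. The gap is that assuming $n$ odd makes the lemma vacuous. The unit $-1$ gives $-2=(-1)^n-1\in QN(RG)$. Since $-2$ is central, this means $1+2x\in U(RG)$ for all $x$, i.e.\ $2\in J(RG)$ (Proposition~\ref{1.1}(i)). Together with $3\in J(RG)$ this gives $1\in J(RG)$, so no nonzero $RG$ satisfies the hypotheses. The only case you treat therefore has no content, and even $n$ is never addressed.

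Your reason for discarding $n\equiv 2\pmod 4$ is also wrong: an element $h$ of order $2$ with $h-1\in QN(RG)$ is not harmless. Take $h=g_0^n=g_0^2$ and $x=1$. Then $2-h\in U(RG)$, hence $1+h=3-(2-h)\in J(RG)+U(RG)\subseteq U(RG)$. This contradicts $(1+h)(1-h)=0$ with $1-h\neq 0$; it is the mechanism of your Step 3 with $1+h$ in place of $N$.

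Some restriction on $n$ is indeed necessary, but the right one is $8\nmid n$. For $8\mid n$ the statement is false: $\mathbb{F}_3C_4\cong\mathbb{F}_3\times\mathbb{F}_3\times\mathbb{F}_9$ has unit group of exponent $8$, so it is $n$-$UQ$ with $3=0$, yet $C_4$ has exponent $4$. For $n\equiv 4\pmod 8$ the lemma holds, but group elements no longer suffice. For $g$ of order $4$ put $e=2^{-1}(1-g^2)$, an idempotent; here $2$ is a unit because $3\in J(RG)$. Then $u=1+ge$ is a unit with $u^n-1=-(1+4^{n/4})e$. This is a central unit multiple of a nonzero idempotent, hence not quasi-nilpotent.

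The paper's proof does not cover these cases either. It only shows $1+g^{2^k}=2\in U(RG)$ when $g^{2^k}=1$. It then uses $1+g^{2^{k-1}}\in U(RG)$ without justification and never really invokes the $n$-$UQ$ property. Taken literally, it would force $G=1$, which $\mathbb{F}_3C_2$ (a $2$-$UQ$ ring with $3=0$) refutes. Your strategy of finding a commuting $x$ with $1-qx$ a non-unit is the right kind of argument, but it has to be carried out for even $n$, not odd $n$, to prove anything.
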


\begin{proof}
First of all, we prove for every $g \in G$ and $k \in \mathbb{N}$ that $1+g^{2^k} \in U(RG)$. Since $R\langle g\rangle$ is a rationally closed subring of $RG$, invoking Proposition \ref{1.1}(iv) we deduce that $R\langle g\rangle$ is also an $n$-$UQ$ ring. Therefore, without loss of generality, we may assume that the element $g$ is central.

As $G$ is a $2$-group, let $k \in \mathbb{N}$ be the smallest integer with $g^{2^k}=1$. Consequently, one inspects that $1-g^{2^k}=0$. Since $3 \in J(RG)$, we check that
\[
1+g^{2^k} = 1-g^{2^k} - g^{2^k} + 3g^{2^k} = -g^{2^k} + 3g^{2^k} \in U(RG) + J(RG) \subseteq U(RG).
\]

Now, consider the equality $(1-g^{2^{k-1}})(1+g^{2^{k-1}})=0$. Since $1+g^{2^{k-1}} \in U(RG)$, we must have $1=g^{2^{k-1}}$, which leads to an impossibility. That is why, $G$ must be a group of exponent $2$, as formulated.
\end{proof}

\section{Open Problems}

In closing, we pose two challenging questions hoping to stimulate a further intensive research on the present topic.

\begin{problem}
Let $R$ be a ring and $G$ a multiplicative group. Determine satisfactory necessary and sufficient conditions depending on both $R$ and $G$ under which the group ring $RG$ is $n$-$UQ$ for any $n\geq 1$.
\end{problem}

\begin{problem}
Let $R$ be a $n$-$UQ$ ring with $n\geq2$. Under what suitable conditions on $R$ does the $n$-$UQ$ property imply that $R$ is a $UQ$ ring?
\end{problem}

\end{document}